\tikzset{cross/.style={cross out, draw=black, fill=none, minimum size=2*(#1-\pgflinewidth), inner sep=0pt, outer sep=0pt}, cross/.default={2pt}}
\begin{document}
\renewcommand{\emptyset}{\varnothing}
\newtheorem{theorem}{Theorem}
\newtheorem{conjecture}[theorem]{Conjecture}
\newtheorem{proposition}[theorem]{Proposition}
\newtheorem{question}[theorem]{Question}
\newtheorem{lemma}[theorem]{Lemma}
\newtheorem{cor}[theorem]{Corollary}
\newtheorem{obs}[theorem]{Observation}
\newtheorem{proc}[theorem]{Procedure}
\newtheorem{defn}[theorem]{Definition}
\newtheorem{remark}[theorem]{Remark}
\newcommand{\comments}[1]{} 
\def\Z{\mathbb Z}
\def\Za{\mathbb Z^\ast}
\def\Fq{{\mathbb F}_q}
\def\R{\mathbb R}
\def\N{\mathbb N}
\def\cH{\overline{\mathcal H}}
\def\cF{\mathcal F}
\def\C{\mathbb C}
\def\P{\mathcal P}

\title[Interior Hulls and Continued Fractions]{Interior Hulls of Clean Lattice Parallelograms and Continued Fractions}
\author{Gabriel Khan}
\address{GK: Department of Mathematics, Iowa State University, Ames IA, 50011}
\email{gkhan@iastate.edu}
\thanks{G. Khan is supported in part by Simons Collaboration Grant 849022}
\author{Mizan R. Khan}
\address{MRK: Department of Mathematical Sciences, Eastern Connecticut State University, Willimantic, CT 06226}
\email{khanm@easternct.edu}
\author{Riaz R. Khan}
\address{RRK: Dhaka, Bangladesh}
\email{riaz.r.khan@gmail.com}
\author{Peng Zhao}
\address{PZ: Economic Modeling LLC, Boston, MA}
\email{zhaopeng23@gmail.com}

\date{\today}

\maketitle

\begin{abstract}
    The interior hull of a lattice polygon is the convex closure of the lattice points in the interior of the polygon. In this paper we give a concrete description of the interior hull of a clean lattice parallelogram. A clean parallelogram in $\R^2$ is a lattice parallelogram whose boundary contains no lattice points other than its vertices. Using unimodular maps we can identify a clean parallelogram with a parallelogram, $P_{a,n}$, whose vertices are $(0,0), (1,0), (a,n)$ and $(a+1,n)$, with $0<a <n$ and $\gcd(a,n)=1$. Following Stark's geometric approach to continued fractions we show that the convergents of the  continued fraction of $n/a$ (viewed as lattice points) appear in a one-to-two correspondence with the vertices of the interior hull of this parallelogram. Consequently, if the continued fraction of $n/a$ has many partial quotients, then the interior hull of the corresponding parallelogram has many vertices.
    
    A pleasing consequence of our work is that we obtain an elementary geometric interpretation of the sum of the partial quotients of the continued fraction of $n/a$. Specifically, it is the difference between the area of the clean parallelogram $P_{a,n}$ and the area of its interior hull.
    
\end{abstract}

\section{Introduction}
 A lattice polygon in $\R^2$ is a polygon all of whose vertices belong to the integer lattice $\Z^2$. A beautiful elementary result for convex lattice polygons is Scott's inequality~\cite{Sco}, which states that if a convex lattice polygon contains $i>0$ lattice points in its interior and $b$ lattice points on its boundary, then 
 \begin{equation} b \leq 2i +7.\end{equation}
 Hasse and Schicho~\cite{H-S} improved on Scott's inequality by introducing a discrete analogue of the radius of the incircle in Euclidean geometry. They called this new invariant the {\it level} and denoted it as $l$. They then proved the stronger inequality 
\begin{equation}(2l-1)b \leq 2i+9l^2-2\end{equation}
for $l\geq 1.$

We should give a brief explanation of what the level is. To do so, we first define the interior hull of a lattice polygon. 

\begin{defn}
Let $P \subseteq \R^2$ be a lattice polygon containing at least one lattice point in its interior. The interior hull of $P$, denoted by $P^{(1)}$, is the convex closure of the set of lattice points in the interior of P, that is,
$$P^{(1)} = \textrm{conv}\left(\textrm{interior}(P) \cap \Z^2\right).$$
\end{defn}

Now we can obtain a nested sequence of interior hulls 
$$P^{(1)}\supseteq P^{(2)} \supseteq P^{(3)} \supseteq \ldots, $$
where $P^{(2)}$ is the interior hull of $P^{(1)}$, $P^{(3)}$ is the interior hull of $P^{(2)}$ and so on. Modulo some technicalities, the level is essentially the positive integer $n$ such that $P^{(n+1)} = \emptyset.$ Hasse and Schicho proved their inequality by considering the nested sequence of interior hulls and then peeling off these hulls. Since this peeling process is reminiscent of peeling the layers of an onion, they gave their inequality the picturesque name ``The Onion-Skin Theorem"!

In the case of a lattice rectangle with a horizontal base, the interior hull is uninteresting. It is either a lattice rectangle, a point, or a set of lattice points arranged vertically or horizontally. Consequently we decided to study lattice parallelograms. 
Our initial goal was to find examples of lattice parallelograms where the interior hull has a large number of vertices. A natural place to start is to examine \emph{clean} lattice parallelograms. For these parallelograms, there is an interesting  one-to-two correspondence between continued fractions and the vertices of $P^{(1)}$. Using this correspondence we can construct lattice parallelograms where the interior hull has a large number of vertices; thus answering our initial question. We then derive some simple consequences of our description of the vertices of $P^{(1)}$ with the most intriguing being an elementary geometric interpretation of the sum of the partial quotients of a finite continued fraction.

Our exposition and proofs are based on Stark's geometric approach to continued fractions from Chapter 7 of his introductory textbook on number theory~\cite{S}. The clarity of his exposition (both written and pictorial) played a significant role in guiding us in our work.

\subsection{Preliminaries}

 We start by giving some definitions. A natural tool in our work is the concept of an unimodular map --- the maps that preserve the lattice $\Z^n$. 

\begin{defn} 
An \emph{affine unimodular map} is an affine map 
$${\mathcal T}: \R^n \rightarrow \R^n \textrm{ of the form } {\mathcal T}(\mathbf{x}) = M\mathbf{x} + \mathbf{u},$$
where $M \in GL_n(\Z)$, $\det(M) =\pm 1$, and $\mathbf{u}\in \Z^n$. 
\end{defn}

We typically omit the word \emph{affine} when discussing such maps. Such maps give a natural definition of equivalence for lattice polygons. 

\begin{defn} 
Two lattice polygons $P_1$ and $P_2$ are said to be \emph{unimodularly equivalent} if there is an unimodular map $\mathcal T$ such that 
$${\mathcal T}(P_1) = {\mathcal T}(P_2).$$
\end{defn}

Let $P$ be a lattice parallelogram. Since translation by a lattice point preserves the lattice structure we may assume without loss of generality that one of the vertices of $P$ is the origin and that $P$ is of the form 
$$P= \{s\mathbf{u}+ t\mathbf{v} \; : \; \mathbf{u},\mathbf{v} \in \Z^2, 0\leq s,t \leq 1\}.$$
In the study of such parallelograms it is sometimes convenient to examine the quotient group  
$(\Z\oplus \Z)/(\Z\mathbf{u}\oplus \Z\mathbf{v})$. 
A basic result for this quotient group is the following.

\begin{theorem}
Let $\mathbf{u}=(u_1,u_2),\mathbf{v}=(v_1,v_2) \in \Z^2$ be linearly independent. Then,
\begin{equation}
\#\left(\frac{\Z\oplus \Z}{\Z\mathbf{u}\oplus \Z\mathbf{v}}\right)= |u_1v_2-u_2v_1|.
\end{equation}

That is, the cardinality of the quotient group equals the area of the parallelogram spanned by $\mathbf{u},\mathbf{v}$.
\end{theorem}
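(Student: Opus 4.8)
The plan is to exploit the unimodular maps just introduced to reduce the matrix whose columns are $\mathbf{u}$ and $\mathbf{v}$ to a diagonal matrix, for which the quotient group is transparent. Write $M = \begin{pmatrix} u_1 & v_1 \\ u_2 & v_2 \end{pmatrix}$, so that $\Z\mathbf{u}\oplus\Z\mathbf{v} = M\Z^2$ and $\det M = u_1 v_2 - u_2 v_1 \neq 0$ by linear independence. First I would record two invariance principles. Right-multiplying $M$ by a matrix $V \in GL_2(\Z)$ replaces the ordered basis $(\mathbf{u},\mathbf{v})$ by another basis of the \emph{same} sublattice $M\Z^2$, and so changes neither the quotient group nor $|\det M|$. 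Left-multiplying $M$ by $U \in GL_2(\Z)$ applies the lattice automorphism $\mathbf{x}\mapsto U\mathbf{x}$ of $\Z^2$; this carries $M\Z^2$ onto $UM\Z^2$ and descends to a group isomorphism of the two quotients, while $|\det(UM)| = |\det M|$. Hence both the cardinality of the quotient and the quantity $|\det M|$ are invariant under the admissible row and column operations.

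Next I would apply these operations to put $M$ into diagonal form. Running the Euclidean algorithm on the entries --- adding integer multiples of one row (column) to another, together with swaps and sign changes, all realized by elements of $GL_2(\Z)$ --- drives $M$ to a diagonal matrix $\mathrm{diag}(d_1,d_2)$ with $d_1 d_2 = \pm\det M$; this is the $2\times 2$ case of the Smith normal form. For the diagonal matrix the sublattice is $d_1\Z\oplus d_2\Z$, so the quotient is the explicit group $(\Z/d_1\Z)\oplus(\Z/d_2\Z)$, of order $|d_1 d_2| = |\det M| = |u_1 v_2 - u_2 v_1|$. Combined with the invariance principles, this completes the count.

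The step requiring the most care is verifying that the reduction to diagonal form stays entirely within $GL_2(\Z)$ and terminates. Here I would track that each elementary operation is invertible over $\Z$ (so it is genuinely unimodular) and that the Euclidean descent strictly decreases a nonnegative integer measure of the entries, which guarantees termination.

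As an alternative that remains closer to the geometric spirit of the paper, I would count lattice points directly. The half-open parallelogram $F = \{s\mathbf{u}+t\mathbf{v} : 0\le s,t<1\}$ is a fundamental domain for the action of $\Z\mathbf{u}\oplus\Z\mathbf{v}$ on $\R^2$, so every coset of $\Z^2/(\Z\mathbf{u}\oplus\Z\mathbf{v})$ has a unique representative among the lattice points in $F$; thus the quotient has exactly $\#(\Z^2\cap F)$ elements. A density argument --- comparing the number of lattice points to the area over a large region tiled by the translates $F+\mathbf{w}$, with $\mathbf{w}\in\Z\mathbf{u}\oplus\Z\mathbf{v}$ --- would then identify this count with $\mathrm{area}(F) = |u_1 v_2 - u_2 v_1|$. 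The obstacle in this route is making the density comparison rigorous by controlling the boundary contribution, which is precisely the technical point the diagonalization argument avoids.
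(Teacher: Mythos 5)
The paper offers no proof of this statement at all --- it is recorded as a known background fact (``A basic result for this quotient group is the following'') and the exposition moves straight on to Pick's theorem --- so there is no argument of the authors to compare yours against; what matters is whether your proof stands on its own, and it does. Your main route is correct and complete: right multiplication of $M$ by an element of $GL_2(\Z)$ fixes the sublattice $M\Z^2$ (since $V\Z^2=\Z^2$), left multiplication induces an isomorphism of the quotient groups, both operations preserve $|\det M|$, and the Euclidean reduction to the Smith normal form $\mathrm{diag}(d_1,d_2)$ terminates because each division step strictly decreases a positive integer. Since $\det M\neq 0$ forces $d_1,d_2\neq 0$, the quotient $(\Z/d_1\Z)\oplus(\Z/d_2\Z)$ is finite of order $|d_1d_2|=|\det M|$, which is exactly the claim. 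This algebraic argument is also well matched to the paper's emphasis on unimodular maps.

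One improvement to your alternative geometric route: the density/tiling comparison whose boundary control you flag as the obstacle is unnecessary, because the count $\#\bigl(\Z^2\cap F\bigr)$ can be evaluated exactly by Pick's theorem, which the paper states immediately after this result. Your fundamental-domain step is fine: every coset has a unique representative in $F=\{s\mathbf{u}+t\mathbf{v} : 0\le s,t<1\}$. Now let $I$ and $B$ be the numbers of interior and boundary lattice points of the closed parallelogram. The set $F$ contains the $I$ interior points, the vertex $\mathbf{0}$, and the non-vertex lattice points on the two half-open edges emanating from $\mathbf{0}$; since opposite edges are lattice translates of one another, those two edges carry $(B-4)/2$ non-vertex points between them. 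Hence
\begin{equation*}
\#\bigl(\Z^2\cap F\bigr)=I+1+\frac{B-4}{2}=I+\frac{B}{2}-1=\mathrm{area}(F)=|u_1v_2-u_2v_1|,
\end{equation*}
the last equality by Pick. This closes the gap in your second route using only tools already on the page, and it is presumably why the authors placed Pick's theorem directly after this statement.
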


Since we have mentioned area, we should remind the reader of the well known theorem of Pick on lattice polygons.

\begin{theorem}[Pick]
Let $C$ be a simple polygon in $\R^2$ with vertices in $\Z^2$. Let 
$$I = \#(\Z^2 \cap interior(C)) \textrm{ and } B= \#(\Z^2 \cap boundary(C)).$$
Then,
\begin{equation}
area(C) = I+ B/2-1.
\end{equation}
\end{theorem}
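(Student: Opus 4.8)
The plan is to prove Pick's formula by showing that the \emph{Pick functional} $\mathrm{Pick}(C) := I + B/2 - 1$ obeys the same additivity as area and then checking both quantities on a simple enough base case. First I would record the key additivity property: if a simple lattice polygon $C$ is cut along a chord joining two of its boundary lattice points into two simple lattice polygons $C_1, C_2$ (equivalently, $C$ is obtained by gluing $C_1$ and $C_2$ along a shared edge), then
\begin{equation}
\mathrm{Pick}(C) = \mathrm{Pick}(C_1) + \mathrm{Pick}(C_2).
\end{equation}
This follows from careful bookkeeping of lattice points along the shared edge: if that edge carries $k$ lattice points in its relative interior, then those $k$ points become interior to $C$ while the two shared endpoints stay on $\partial C$, so $I = I_1 + I_2 + k$ and $B = B_1 + B_2 - 2k - 2$. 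Substituting into the definition of $\mathrm{Pick}$ makes the $k$-dependence cancel and yields the stated identity. Since $\mathrm{area}$ is trivially additive under the same cut, it suffices to verify $\mathrm{area}(C) = \mathrm{Pick}(C)$ on the pieces into which every polygon decomposes.

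Next I would triangulate. Every simple lattice polygon admits a triangulation with lattice-point vertices, and by iterating the additivity above the identity for $C$ reduces to the identity for each triangle. I would then subdivide each lattice triangle further: whenever a lattice triangle contains a lattice point other than its three vertices---either in its interior or in the relative interior of an edge---I cone from that point (or split along the edge) to produce strictly smaller lattice triangles, each with fewer lattice points. Iterating, every lattice triangle is partitioned into \emph{empty} lattice triangles, i.e. triangles whose only lattice points are their three vertices.

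The base case is an empty lattice triangle $T$. Applying a unimodular map---which preserves area as well as the interior and boundary counts, hence both sides of Pick's formula---I may translate one vertex to the origin and write the vertices as $\mathbf 0, \mathbf u, \mathbf v$ with $\mathbf u = (u_1,u_2)$, $\mathbf v = (v_1,v_2)$. Emptiness of $T$ forces $\{\mathbf u,\mathbf v\}$ to be a $\Z$-basis of $\Z^2$: if the index of $\Z\mathbf u \oplus \Z\mathbf v$ in $\Z\oplus\Z$ exceeded $1$, a nontrivial coset representative in the fundamental parallelogram would produce a non-vertex lattice point in $T$ or on an edge (after possibly reflecting through the center of the parallelogram). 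Hence by the theorem counting $(\Z\oplus\Z)/(\Z\mathbf u\oplus\Z\mathbf v)$ we have $|u_1 v_2 - u_2 v_1| = 1$, so $\mathrm{area}(T) = 1/2$, while $I = 0$ and $B = 3$ give $\mathrm{Pick}(T) = 0 + 3/2 - 1 = 1/2$. The two agree, and unwinding the additivity completes the argument.

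The main obstacle is the combinatorial geometry underlying the additivity step: one must confirm that a simple lattice polygon can always be cut into two simple lattice pieces along a lattice chord (so that the triangulation and its empty refinement exist), and one must handle the boundary-point bookkeeping carefully, in particular checking that lattice points in the relative interior of a cutting chord genuinely become interior points of the reassembled polygon. Once these are in place, the reduction to empty triangles and the base case are routine, the latter being a direct application of the quotient-group cardinality theorem already available above.
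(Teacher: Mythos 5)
First, a point of comparison: the paper does not prove Pick's theorem at all --- it is quoted as a known classical fact --- so your proposal can only be judged on its own merits. It is very close to the standard proof, and most of it is right (the chord bookkeeping, the triangulation of the polygon by diagonals, the termination argument, and the base case via the quotient-group cardinality theorem plus the central reflection of the fundamental parallelogram are all correct). But there is one genuine gap: your additivity lemma is proved only for cutting along a \emph{straight} chord joining two boundary lattice points, while your refinement step uses coning from an \emph{interior} lattice point, which is not a chord cut and cannot in general be reduced to chord cuts. Concretely, the lattice triangle with vertices $(0,0)$, $(2,1)$, $(1,2)$ has exactly one non-vertex lattice point, the interior point $(1,1)$, and no lattice points in the relative interiors of its edges; its only boundary lattice points are its three vertices, so every segment joining two boundary lattice points is an edge and the triangle admits no chord cut whatsoever. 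Thus the induction driven by your lemma stalls exactly here, and the ``main obstacle'' you flag at the end (that one can always cut along a lattice chord) is in fact false at this stage.

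The repair is easy but must be made explicit. Prove additivity in the slightly more general form: if $C$ is the union of two simple lattice polygons $C_1$, $C_2$ with disjoint interiors whose intersection is a polygonal arc with lattice-point endpoints, its relative interior lying in the interior of $C$, then with $k$ the number of lattice points in the relative interior of that arc the same bookkeeping $I = I_1 + I_2 + k$ and $B = B_1 + B_2 - 2k - 2$ holds, so $\mathrm{Pick}$ is still additive. With this version, coning the triangle $ABC$ from an interior lattice point $p$ is handled in two steps: glue $ABp$ and $ACp$ along the chord $Ap$ to form the quadrilateral $ABpC$, then glue $ABpC$ to $BCp$ along the two-edge arc from $B$ through $p$ to $C$ (here $k$ counts $p$ itself, which indeed becomes an interior point of $ABC$, e.g. $1 = 0+0+1$ and $3 = 4+3-2-2$ in the example above). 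Alternatively, you could follow the other classical route --- axis-parallel rectangles, then right triangles, then general triangles by subtracting right triangles from a bounding rectangle --- which avoids interior coning altogether. With either fix, the rest of your argument, including the base case that an empty lattice triangle spans a $\Z$-basis and hence has area $1/2$, goes through.
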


The following observation will play a role in our work.

\begin{lemma}\label{antidiag-cor}
Let $P$ be a  lattice parallelogram spanned by the lattice points $\mathbf{u}=(u_1,u_2)$ and $\mathbf{v}=(v_1,v_2)$. Let $T_1$ be the lattice triangle with vertices $\mathbf{0},\mathbf{u}$ and $\mathbf{v}$ and  
let $T_2$ be the lattice triangle with vertices $\mathbf{u},\mathbf{v}$ and $\mathbf{u}+\mathbf{v}$, that is, $T_1,T_2$ are the two triangles formed by the antidiagonal of $P$. Then the correspondence
$$ s\mathbf{u}+ t\mathbf{v} \leftrightarrow (1-s)\mathbf{u}+ (1-t)\mathbf{v},
\textrm{ with } 0 < s,t < 1,$$
gives a bijective correspondence between the points of $T_1$ and the points of $T_2$.
\end{lemma}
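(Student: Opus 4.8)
The plan is to recognize the stated correspondence as the reflection of the parallelogram $P$ through its center. Writing $\phi(\mathbf{x}) = (\mathbf{u}+\mathbf{v}) - \mathbf{x}$, I would first check that $\phi$ encodes the rule in the statement: if $\mathbf{x} = s\mathbf{u}+t\mathbf{v}$ then $\phi(\mathbf{x}) = (1-s)\mathbf{u}+(1-t)\mathbf{v}$. This replaces the bookkeeping in the coordinates $(s,t)$ by the study of a single affine map.

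Next I would observe that $\phi$ is an affine unimodular involution. In the language of the definition of an affine unimodular map, $\phi(\mathbf{x}) = M\mathbf{x}+\mathbf{w}$ with $M = -I \in GL_2(\Z)$, $\det M = 1$, and $\mathbf{w} = \mathbf{u}+\mathbf{v}\in\Z^2$; moreover $\phi\circ\phi = \mathrm{id}$. Hence $\phi$ maps $\Z^2$ bijectively onto itself and equals its own inverse, so it sends lattice points to lattice points in a two-sided fashion. This is the step that upgrades the map from a bijection of real points to a bijection of lattice points, and it is exactly where unimodularity is needed.

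The third step is to verify $\phi(T_1) = T_2$. Because $\phi$ is affine it carries convex hulls to convex hulls, so it suffices to track the vertices: $\phi(\mathbf{0}) = \mathbf{u}+\mathbf{v}$, $\phi(\mathbf{u}) = \mathbf{v}$, and $\phi(\mathbf{v}) = \mathbf{u}$, which are precisely the vertices of $T_2$. Thus $\phi(T_1) = T_2$, and by the involution property $\phi(T_2) = T_1$; combined with the previous step, $\phi$ restricts to a bijection between the lattice points of $T_1$ and those of $T_2$.

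Finally I would reconcile the constraint $0 < s,t < 1$ with the two triangles. Since $(1-s)+(1-t) = 2-(s+t)$, the condition $s+t \le 1$ cutting out the $T_1$ side is equivalent to $(1-s)+(1-t)\ge 1$ cutting out the $T_2$ side, while $s,t<1$ is equivalent to $1-s,1-t>0$; so $\phi$ carries the points of $T_1$ with $0<s,t<1$ exactly onto the points of $T_2$ with $0<s,t<1$, fixing the shared antidiagonal $\{s+t=1\}$ setwise. I expect no genuine obstacle here: the only care needed is to match these strict inequalities consistently across the shared antidiagonal so that nothing is double-counted or omitted, after which the lemma follows formally from $\phi$ being a unimodular involution that interchanges the two triangles.
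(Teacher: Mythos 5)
Your proof is correct. The paper states Lemma~\ref{antidiag-cor} without giving any proof, so there is no argument to compare against line by line; but your central device --- the point reflection $\phi(\mathbf{x}) = (\mathbf{u}+\mathbf{v})-\mathbf{x}$, an affine unimodular involution that swaps $T_1$ and $T_2$ and carries the strict constraints $0<s,t<1$ onto themselves --- is exactly the map the paper later introduces (specialized to $P_{a,n}$) as the isometry ${\mathcal S}$ of Lemma~\ref{isometry}, so your route coincides with the paper's intended one.
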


\section{Clean Parallelograms and a Reduction Result}

For the remainder of the paper we will work with clean parallelograms. The word \emph{clean} was introduced by Reznick in~\cite{Rez}.

\begin{defn}
A lattice parallelogram in $\R^2$ is said to be \emph{clean} if the only lattice points on its sides are the vertices. If a clean lattice parallelogram does not contain any lattice points in its interior, then we call it an \emph{empty} parallelogram.
\end{defn}

We will use $P_{a,n}$ to denote 
the clean parallelogram with vertices $(0,0),(1,0),$ $(a,n)$ and $(a+1,n)$, where $1\leq a < n$ and $\gcd(a,n) =1$. Since unimodular maps preserve $\Z^2$ they map clean parallelograms to clean parallelograms. In our next result we apply this property to show that a clean parallelogram is equivalent to some $P_{a,n}$.

\begin{theorem}\label{reduction-res}
Let $P$ be the clean lattice parallelogram spanned by the lattice points $\mathbf{u},\mathbf{v}$ with $\textrm{area}(P) =n$. Then there is an unimodular map 
$T: \R^2 \rightarrow \R^2$ such that 
$$T(P) = P_{a,n}, \textrm{ with }1\leq a <n  \textrm{ and } \gcd(a,n)=1.$$
\end{theorem}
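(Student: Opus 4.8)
The plan is to put the spanning vectors $\mathbf{u},\mathbf{v}$ into a normal form by composing a short sequence of elementary unimodular maps — in effect, computing a two-dimensional Hermite normal form of the matrix $[\mathbf{u}\ \mathbf{v}]$ under left multiplication by $GL_2(\Z)$. First I would translate $P$ by a lattice vector so that one of its vertices lies at the origin; this is an affine unimodular map, and it reduces us to the case $P=\{s\mathbf{u}+t\mathbf{v}:0\le s,t\le 1\}$ with vertices $\mathbf{0},\mathbf{u},\mathbf{v},\mathbf{u}+\mathbf{v}$. The key structural input is that, because $P$ is clean, each edge vector is \emph{primitive}: the segment from $\mathbf{0}$ to $\mathbf{u}$ contains no lattice point besides its endpoints, so $\gcd(u_1,u_2)=1$, and likewise $\gcd(v_1,v_2)=1$.

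Next I would send $\mathbf{u}=(u_1,u_2)$ to $(1,0)$. Since $\gcd(u_1,u_2)=1$, Bézout's identity gives integers $p,q$ with $pu_1+qu_2=1$, and the matrix $M=\begin{pmatrix} p & q\\ -u_2 & u_1\end{pmatrix}$ satisfies $\det M = pu_1+qu_2 = 1$, so $M\in SL_2(\Z)$, and a direct computation gives $M\mathbf{u}=(1,0)^{\mathsf T}$. Applying the unimodular map $\mathbf{x}\mapsto M\mathbf{x}$ carries $P$ to the parallelogram spanned by $(1,0)$ and $M\mathbf{v}=(v_1',v_2')$. Because $\det M=\pm1$ preserves area, and the area of the parallelogram spanned by $(1,0)$ and $(v_1',v_2')$ is simply $|v_2'|$, we must have $v_2'=\pm n$. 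If $v_2'=-n$, I would further compose with the reflection $(x,y)\mapsto(x,-y)$, which fixes the edge from $\mathbf{0}$ to $(1,0)$ and flips the sign, so that after relabeling we may assume $v_2'=n$.

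Finally I would apply the shear $S_k:(x,y)\mapsto(x-ky,y)$, whose matrix $\begin{pmatrix}1 & -k\\ 0 & 1\end{pmatrix}$ lies in $SL_2(\Z)$ and which fixes both $(0,0)$ and $(1,0)$ while sending $(v_1',n)$ to $(v_1'-kn,\,n)$. Choosing $k=\lfloor v_1'/n\rfloor$ replaces $v_1'$ by its least nonnegative residue $a\in\{0,1,\dots,n-1\}$. Letting $T$ denote the composite of the translation, $M$, the possible reflection, and $S_k$, the map $T$ is unimodular and carries the vertices $\mathbf{0},\mathbf{u},\mathbf{v},\mathbf{u}+\mathbf{v}$ to $(0,0),(1,0),(a,n),(a+1,n)$, so that $T(P)=P_{a,n}$.

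It remains to verify the constraints on $a$. Since unimodular maps carry clean parallelograms to clean parallelograms, $P_{a,n}=T(P)$ is clean, so its edge from $(0,0)$ to $(a,n)$ is primitive; hence $\gcd(a,n)=1$. In particular, when $n>1$ the value $a=0$ is impossible, because $\gcd(0,n)=n$, so $1\le a<n$ as claimed. I expect the only genuine care required here is bookkeeping — tracking the sign of $v_2'$ and confirming the final vertex list — rather than any real obstacle: each step is an elementary, area-preserving (up to sign) unimodular map, and the cleanliness of $T(P)$ delivers the coprimality condition for free.
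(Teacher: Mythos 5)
Your proof is correct and follows essentially the same route as the paper's: use B\'ezout's identity to build a unimodular matrix sending the primitive edge vector $\mathbf{u}$ to $(1,0)$, then apply an integer shear to reduce the first coordinate of the image of $\mathbf{v}$ modulo $n$. The only cosmetic differences are that the paper normalizes orientation at the outset (assuming $\det(\mathbf{u},\mathbf{v})>0$) where you instead insert a reflection $(x,y)\mapsto(x,-y)$ when $v_2'=-n$, and that you spell out explicitly (via cleanliness of $T(P)$) why $\gcd(a,n)=1$ and $a\neq 0$, a point the paper leaves implicit.
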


\begin{proof}
We begin by observing that there are precisely $(n-1)$ lattice points in the interior of $P$. This follows by combining Pick's theorem with the hypotheses that $\textrm{area}(P)=n$ and that the only lattice points on the boundary of $P$ are the vertices. (Another way to obtain this is to observe that each lattice point in the interior of $P$ denotes a distinct non-zero coset  of
$(\Z\oplus\Z)/(\Z\mathbf{u}\oplus\Z\mathbf{v})$ and each vertex of $P$ represents the zero coset.)

Without loss of generality we may assume that the pair of lattice points 
$$\mathbf{u}= (u_1,u_2),\, \mathbf{v}=(v_1,v_2)$$ 
are positively oriented, that is, $\det(\mathbf{u},\mathbf{v}) >0$. Since 
$\gcd(u_1,u_2) = 1$, there exist $m_1,m_2 \in \Z$ such that
$$m_1u_1+m_2u_2 =1.$$
We now consider the unimodular matrix 
\begin{equation*}
\left( \begin{array}{cc} m_1 & m_2\\ -u_2 & u_1 \end{array} \right).
\end{equation*}
This unimodular matrix maps $P$ to the clean parallelogram 
$$ P^{\prime}= \left\{ t_1 (1,0) + t_2(m_1v_1+m_2v_2,n) \; : \, 0\leq t_1,t_2 \leq 1 \right\}.$$
If $0 < (m_1v_1+m_2v_2) <n$, then 
$a=  m_1v_1+m_2v_2 .$
If $ (m_1v_1+m_2v_2) $ does not satisfy the above inequality, then we find $k\in \Z$ such that 
$0 \leq ( m_1v_1+m_2v_2  +kn) <n$  and act on $P^\prime$  by the unimodular matrix 
\begin{equation*}
\left( \begin{array}{cc}1 & k\\0& 1\end{array} \right)
\end{equation*}
to obtain $P_{a,n}$.
\end{proof}

\begin{lemma}
Let $a,n \in \N$ with $n>1$ and $\gcd(a,n)=1$. Furthermore, let $a^{-1}\in \N$, with $a <n$, such that $aa^{-1} \equiv 1 \pmod{n}.$ Then the clean parallelograms 
$P_{a,n},P_{n-a,n},P_{a^{-1},n},P_{n-a^{-1},n}$ are unimodularly equivalent.
\end{lemma}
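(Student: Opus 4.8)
The plan is to prove all four equivalences by exhibiting explicit affine unimodular maps. Since unimodular equivalence is an equivalence relation, it suffices to realize the three equivalences $P_{a,n}\cong P_{n-a,n}$, $P_{a,n}\cong P_{a^{-1},n}$, and $P_{a,n}\cong P_{n-a^{-1},n}$ by maps out of $P_{a,n}$, and the last of these will come for free by composing the first two. Throughout I use that $P_{a,n}$ is based at the origin and spanned by the edge vectors $(1,0)$ and $(a,n)$, so that any candidate affine map is pinned down once I track the images of these two edges (and check that the base vertex lands on, or is translated back to, a vertex of the target).

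The transformation $a\mapsto n-a$ is a genuine symmetry of the plane and is the easy case. Reflecting $P_{a,n}$ in a vertical line sends the spanning vectors $(1,0),(a,n)$ to $(-1,0),(-a,n)$, and a single horizontal shear together with a translation returns the image to standard position. Concretely I would verify that
$$\sigma(\mathbf x)=\begin{pmatrix}-1&1\\0&1\end{pmatrix}\mathbf x+\begin{pmatrix}1\\0\end{pmatrix}$$
is unimodular (its linear part has determinant $-1$) and carries the vertices $(0,0),(1,0),(a,n),(a+1,n)$ to $(1,0),(0,0),(n-a+1,n),(n-a,n)$, i.e.\ to the vertex set of $P_{n-a,n}$. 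Observe that $\sigma$ does not depend on $a$, so it simultaneously proves $P_{c,n}\cong P_{n-c,n}$ for every admissible $c$; this is exactly what will let me reuse it on $P_{a^{-1},n}$ at the end.

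The equivalence $a\mapsto a^{-1}$ is the crux, and I expect it to require the most care, because it is not a geometric symmetry but an arithmetic statement that must be encoded in the entries of an integer matrix. Writing $b$ for the residue $a^{-1}\bmod n$ in $[1,n)$ and setting $k=(ab-1)/n\in\Z$ (an integer precisely because $ab\equiv1\pmod n$), I would propose the linear map $\tau(\mathbf x)=\begin{pmatrix}b&-k\\ n&-a\end{pmatrix}\mathbf x$. The key verification is that $\det\begin{pmatrix}b&-k\\ n&-a\end{pmatrix}=-ab+kn=-(ab-kn)=-1$, which is nothing but the congruence $ab\equiv1\pmod n$ in disguise; this is where the hypothesis on $a^{-1}$ enters and is the delicate step. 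One then checks $\tau(1,0)=(b,n)$ and $\tau(a,n)=(1,0)$, so $\tau$ sends the edges of $P_{a,n}$ to those of the parallelogram based at the origin spanned by $(1,0),(b,n)$, namely $P_{b,n}=P_{a^{-1},n}$; no further shear or translation is needed since $b$ already lies in $[1,n)$.

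Finally, composing yields the remaining parallelogram: applying $\sigma$ to $P_{a^{-1},n}$ (the $c=a^{-1}$ instance noted above) gives $P_{n-a^{-1},n}$, so $\sigma\circ\tau$ realizes $P_{a,n}\cong P_{n-a^{-1},n}$, and by transitivity all four parallelograms are unimodularly equivalent. I would close by noting that the four parameters $a,\,n-a,\,a^{-1},\,n-a^{-1}$ all lie in $[1,n)$ and are coprime to $n$, so each is genuinely one of the standard clean parallelograms, and by remarking on the conceptual source of these maps: they are precisely the outputs of the reduction procedure of Theorem~\ref{reduction-res} applied to $P_{a,n}$ under the various admissible choices of ordered spanning pair and orientation.
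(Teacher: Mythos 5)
Your proposal is correct and matches the paper's proof essentially verbatim: the same shear-reflection $\mathbf{x}\mapsto\left(\begin{smallmatrix}-1&1\\0&1\end{smallmatrix}\right)\mathbf{x}+\left(\begin{smallmatrix}1\\0\end{smallmatrix}\right)$ handles $a\mapsto n-a$, and your matrix $\left(\begin{smallmatrix}b&-k\\n&-a\end{smallmatrix}\right)$ is literally the paper's $\left(\begin{smallmatrix}a^{-1}&m\\n&-a\end{smallmatrix}\right)$ with $m=-k$ from the B\'ezout relation. Your explicit composition step for $P_{n-a^{-1},n}$ and the vertex-by-vertex checks only spell out what the paper leaves implicit.
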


\begin{proof}
The unimodular map 
\begin{equation*}
\mathbf{x} \mapsto \left( \begin{array}{cc} -1 & 1\\ 0 & 1 \end{array} \right)\mathbf{x} +\left(\begin{array}{c} 1\\0 \end{array}\right)
\end{equation*}
shows that $P_{n-a,n}$ is unimodularly equivalent to $P_{a,n}$.

By the extended Euclidean algorithm there exist $m \in \Z$
such that $a^{-1}a+mn =1.$ The unimodular map
\begin{equation*}
\mathbf{x} \mapsto \left( \begin{array}{cc} a^{-1} & m\\ n & -a \end{array} \right)\mathbf{x}
\end{equation*}
shows that $P_{a^{-1},n}$ is unimodularly equivalent to $P_{a,n}$.
\end{proof}

A nice aspect of working with $P_{a,n}$ is that we can easily generate the lattice points in its interior by observing that the lattice point $(1,1)$ is a generator of the cyclic group 
$\left(\Z\oplus \Z\right)/\left(\Z(1,0)\oplus \Z(a,n) \right)$. Specifically we have the following result.

\begin{lemma}The lattice points in the interior of $P_{a,n}$ are of the form 
$$ \left\langle \frac{k(n-a)}{n} \right\rangle (1,0) + \frac{k}{n}(a,n), \, k =1,\dots, n-1.$$
\end{lemma}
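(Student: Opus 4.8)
The plan is to work directly with the geometry of the horizontal slices of $P_{a,n}$. Recall that $P_{a,n}$ is spanned by $\mathbf{u}=(1,0)$ and $\mathbf{v}=(a,n)$, so a general point is $s\mathbf{u}+t\mathbf{v}=(s+ta,\,tn)$ with $0\le s,t\le 1$, and such a point lies in the interior precisely when $0<s,t<1$. From the proof of Theorem~\ref{reduction-res} we already know there are exactly $n-1$ interior lattice points, so it will suffice to exhibit $n-1$ distinct interior lattice points of the stated form; the count then forces these to be all of them.

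First I would fix an integer height. For a lattice point $(x,y)$ in the interior we have $y=tn$ with $0<t<1$, so $y$ is an integer $k$ with $1\le k\le n-1$ and $t=k/n$. For this fixed $k$, the cross-section of $P_{a,n}$ at height $y=k$ is the open horizontal segment $\{(x,k): ka/n<x<1+ka/n\}$, whose length is exactly $1$. Because $\gcd(a,n)=1$ and $0<k<n$, the number $ka/n$ is not an integer, so the two endpoints are non-integers and an open interval of length $1$ with non-integer endpoints contains exactly one integer, namely $\lfloor ka/n\rfloor+1$. Hence there is precisely one interior lattice point at each height $k=1,\dots,n-1$, giving the required $n-1$ distinct points.

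It then remains to put this point in the advertised form. Writing $ka=qn+r$ with $q=\lfloor ka/n\rfloor$ and $1\le r\le n-1$, the unique interior lattice point at height $k$ is $(q+1,k)$. Solving $(q+1,k)=s\mathbf{u}+t\mathbf{v}$ gives $t=k/n$ and $s=q+1-ka/n=1-r/n$. Since $k(n-a)/n=k-ka/n\equiv -ka/n\pmod 1$ and $ka/n\notin\Z$, we have $\langle k(n-a)/n\rangle=1-\langle ka/n\rangle=1-r/n=s$, so the point equals $\langle k(n-a)/n\rangle\,(1,0)+\tfrac{k}{n}(a,n)$, as claimed.

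The computations here are short, so the only point requiring genuine care is the fractional-part bookkeeping, specifically the identity $\langle k(n-a)/n\rangle=1-\langle ka/n\rangle$, which relies on $ka/n\notin\Z$. This is exactly where the hypothesis $\gcd(a,n)=1$ with $0<k<n$ is used, and it is also what guarantees one (rather than zero) lattice point in each unit-length slice. An alternative route is to invoke the cyclic structure of $(\Z\oplus\Z)/(\Z(1,0)\oplus\Z(a,n))\cong\Z/n\Z$, in which the $n-1$ nonzero cosets are represented by $k(1,1)$, $k=1,\dots,n-1$, each admitting a unique interior representative; but the slice argument seems the most transparent and makes the count automatic.
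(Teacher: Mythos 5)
Your proof is correct, and it takes a different route from the paper's. The paper does not argue via horizontal slices at all: it derives the lemma from the observation that $(1,1)$ generates the cyclic group $\left(\Z\oplus\Z\right)/\left(\Z(1,0)\oplus\Z(a,n)\right)$, which has order $n$ by the earlier cardinality theorem. From that viewpoint the listed point is just $k(1,1)=\frac{k(n-a)}{n}(1,0)+\frac{k}{n}(a,n)$ reduced modulo $\Z(1,0)$ into the open fundamental parallelogram, so the $n-1$ points are distinct nonzero coset representatives and must exhaust the $n-1$ interior lattice points --- this is the ``alternative route'' you mention in passing at the end. Your slice argument buys something the coset argument leaves implicit: it shows directly that each height $k\in\{1,\dots,n-1\}$ carries \emph{exactly} one interior lattice point, which makes the identification self-contained (your opening appeal to the prior count of $n-1$ interior points is actually redundant, since every interior lattice point has integer height in $\{1,\dots,n-1\}$ and you prove uniqueness at each height). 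What the paper's approach buys in exchange is an explanation of where the formula comes from --- the points are the successive multiples of the generator $(1,1)$ --- which is the structural fact the authors reuse later (e.g.\ in their {\sc Maple} code). Your fractional-part bookkeeping, in particular $\left\langle k(n-a)/n\right\rangle = 1-\left\langle ka/n\right\rangle$ via $ka/n\notin\Z$, is exactly right and is the correct place to spend the hypothesis $\gcd(a,n)=1$.
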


An elaboration of Lemma~\ref{antidiag-cor} gives an instructive aspect of the parallelograms $P_{a,n}$.

\begin{lemma}\label{isometry} The unimodular map ${\mathcal S}: P_{a,n} \rightarrow P_{a,n}$ via 
\begin{equation*}
{\mathcal S}((x,y)) =\left( \begin{array}{cc} -1 & 0\\ 0 & -1 \end{array} \right)\left( \begin{array}{c} x \\ y \end{array} \right) + \left( \begin{array}{c} a+1 \\ n \end{array} \right).
\end{equation*}
is an isometry.
\end{lemma}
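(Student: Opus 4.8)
The plan is to recognize $\mathcal{S}$ as the point reflection (equivalently, rotation by $180^\circ$) about the center of $P_{a,n}$, and then to verify the two assertions packaged in the statement: that $\mathcal{S}$ is an isometry and that it maps $P_{a,n}$ onto itself. Writing $\mathcal{S}(x,y) = (-x+a+1,\, -y+n)$, its unique fixed point is $c = \left(\tfrac{a+1}{2}, \tfrac{n}{2}\right)$, which is exactly the centroid of the four vertices $(0,0),(1,0),(a,n),(a+1,n)$; thus $\mathcal{S}(\mathbf{x}) = 2c - \mathbf{x}$. The isometry claim is then immediate: the linear part of $\mathcal{S}$ is $-I$, which is orthogonal, and any affine map with orthogonal linear part preserves Euclidean distance. (The same structural observation confirms unimodularity, since $\det(-I)=1$ with integer entries and the translation vector $(a+1,n)$ lies in $\Z^2$.)

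The substantive half is showing $\mathcal{S}(P_{a,n}) = P_{a,n}$, and here I would appeal to Lemma~\ref{antidiag-cor}. Parametrizing a point of $P_{a,n}$ as $s\mathbf{u}+t\mathbf{v}$ with $\mathbf{u}=(1,0)$, $\mathbf{v}=(a,n)$ and $0\le s,t\le 1$, a short computation (the $ta$ and $a$ terms in the first coordinate cancel) gives
$$\mathcal{S}(s\mathbf{u}+t\mathbf{v}) = (1-s)\mathbf{u}+(1-t)\mathbf{v},$$
which is precisely the correspondence of Lemma~\ref{antidiag-cor}. Since $0\le s,t\le 1$ is equivalent to $0\le 1-s,\,1-t\le 1$, the image again lies in $P_{a,n}$, so $\mathcal{S}$ is an involution carrying $P_{a,n}$ onto itself. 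A cleaner alternative I would note is simply to check that $\mathcal{S}$ permutes the vertices, swapping $(0,0)\leftrightarrow(a+1,n)$ and $(1,0)\leftrightarrow(a,n)$; since an affine map that carries the vertex set of a convex polygon to itself carries the polygon to itself, this alone yields $\mathcal{S}(P_{a,n})=P_{a,n}$.

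There is no genuine obstacle here, only the bookkeeping in the parametrized computation; the real point of the lemma is conceptual, namely that the abstract antidiagonal correspondence of Lemma~\ref{antidiag-cor} is realized concretely as a rigid symmetry of $P_{a,n}$ fixing its center. I expect this symmetry to be the actually useful output, since it lets one transfer statements about lattice points between the two triangular halves of the parallelogram.
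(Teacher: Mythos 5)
Your proof is correct, and it follows exactly the route the paper intends: the paper states this lemma without proof, introducing it only as ``an elaboration of Lemma~\ref{antidiag-cor},'' and your computation $\mathcal{S}(s\mathbf{u}+t\mathbf{v})=(1-s)\mathbf{u}+(1-t)\mathbf{v}$ is precisely that elaboration, with the observation that the linear part $-I$ is orthogonal supplying the isometry claim. Nothing is missing; you have simply written out the details the authors left implicit.
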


At this juncture we make some simple observations about the vertices of $P_{a,n}^{(1)}$. Let $l_k$ denote the vertical line $x=k$. For $k=1,\ldots, a$, let 
$$S_k =  l_k\cap interior(P_{a,n}) \cap \Z^2.$$
(We note that the set of lattice points in the interior of $P_{a,n}$ is the union $\cup_{k=1}^a S_k$.) 
Let $L(S_k)=$ the lowest point in $S_k$, and $H(S_k)=$ the highest point in $S_k$. By noting that any point lying between 
$L(S_k)$ and $H(S_k)$ cannot be a vertex of $P_{a,n}^{(1)}$. we conclude that 
\begin{equation}
    P_{a,n}^{(1)}  = conv(\{L(S_k),H(S_k) \, : \, k=1,\ldots, a\}).
\end{equation}

\begin{lemma}\label{high-low}
If $a=1$, then $L(S_1)=(1,1)$ and $H(S_1)= (1,n-1)$.

If $a>1$, then for $k=2,\ldots, a-1,$
$$ L(S_k) = \left(k, \left\lceil \frac{(k-1)n}{a}\right\rceil \right) , 
H(S_k) = \left(k, \left\lfloor \frac{kn}{a}\right\rfloor \right);$$
$$L(S_1) =(1,1), H(S_1) = \left( 1, \left\lfloor \frac{n}{a}\right\rfloor\right);$$
and
$$ L(S_a) = \left(a, \left\lceil \frac{(k-1)n}{a}\right\rceil \right), H(S_a) = (a,n-1).$$
\end{lemma}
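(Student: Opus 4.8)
The plan is to describe the interior of $P_{a,n}$ as an intersection of four half-planes and then read off, for each vertical line $x=k$, which integer heights land strictly inside. The left edge joins $(0,0)$ and $(a,n)$, so it lies on $y=(n/a)x$; the right edge joins $(1,0)$ and $(a+1,n)$, so it lies on $y=(n/a)(x-1)$; the bottom and top edges are $y=0$ and $y=n$. Testing the centre of the parallelogram against each edge shows that
$$\mathrm{interior}(P_{a,n}) = \left\{(x,y) : 0 < y < n,\; \tfrac{n}{a}(x-1) < y < \tfrac{n}{a}x \right\}.$$
Hence for $1\le k\le a$ the set $S_k$ consists of the lattice points $(k,y)$, $y\in\Z$, satisfying the strict inequalities $(k-1)n/a < y < kn/a$ together with $0<y<n$.

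The one number-theoretic input I would isolate first is that for $1\le j\le a-1$ the number $jn/a$ is not an integer: $a\mid jn$ would, since $\gcd(a,n)=1$, force $a\mid j$, which is impossible for $0<j<a$. This is exactly what converts the strict inequalities into clean floor/ceiling expressions with no off-by-one ambiguity, since when $jn/a\notin\Z$ the least integer above it is $\lceil jn/a\rceil$ and the greatest integer below it is $\lfloor jn/a\rfloor$.

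With that in hand the computation splits according to which endpoint is binding. For the generic indices $2\le k\le a-1$ both $(k-1)n/a$ and $kn/a$ are non-integers lying in $(0,n)$, so $L(S_k)=\lceil (k-1)n/a\rceil$ and $H(S_k)=\lfloor kn/a\rfloor$. The indices $k=1$ and $k=a$ are precisely where the non-integrality argument fails --- at $k=1$ the lower value $(k-1)n/a=0$ is an integer, and at $k=a$ the upper value $kn/a=n$ is an integer --- so there the binding constraints come from the bottom and top edges instead: positivity $y>0$ gives $L(S_1)=1$, and $y<n$ gives $H(S_a)=n-1$. The remaining endpoints $H(S_1)=\lfloor n/a\rfloor$ (using $a>1$, so $n/a\notin\Z$) and $L(S_a)=\lceil (a-1)n/a\rceil$ follow from the generic formula. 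The degenerate case $a=1$ is treated separately: the line $x=1$ meets the interior in $\{(1,y):0<y<n\}$, so $L(S_1)=(1,1)$ and $H(S_1)=(1,n-1)$; this case is genuinely distinct because here $j=a=1$ makes $jn/a=n$ an integer, and the floor formula would wrongly admit the boundary point $y=n$.

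The work is mostly bookkeeping, so I do not anticipate a serious obstacle; the one place to stay vigilant is the interplay of strict versus non-strict inequalities at the four edges, which is exactly where $\gcd(a,n)=1$ and the separate handling of the two integer-valued endpoints ($k=1$ below, $k=a$ above) earn their keep. I would also confirm that $y=1$ is genuinely interior when $k=1$, which requires $n/a>1$, i.e. $a<n$, guaranteed by hypothesis.
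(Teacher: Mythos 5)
Your proof is correct. The paper states this lemma without proof, treating it as a routine observation, and your argument is exactly the intended bookkeeping: describing the interior as the intersection of the open half-planes $0<y<n$ and $\tfrac{n}{a}(x-1)<y<\tfrac{n}{a}x$, using $\gcd(a,n)=1$ to rule out integrality of $jn/a$ for $0<j<a$, and correctly switching the binding constraint to $y>0$ at $k=1$ and to $y<n$ at $k=a$ (including reading the stray $k$ in the paper's $L(S_a)$ formula as $a$), with the degenerate case $a=1$ handled separately.
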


\begin{lemma}\label{rel-high-low}
Let $k,l\in \Z^+$ with $k+l= a+1$. Then 
$$ L(S_k)+H(S_l) = (a+1,n),$$
that is,
$$H(S_l) = {\mathcal S}(L(S_k)) \textrm{ and } L(S_k) = {\mathcal S}(H(S_l)).$$
It follows that $H(S_l)$ is a point on the boundary of $P_{a,n}^{(1)}$ if and only if the same is true for $L(S_k)$.
\end{lemma}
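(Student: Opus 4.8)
The plan is to deduce everything from the central symmetry $\mathcal{S}$ of Lemma~\ref{isometry}. Writing out its formula, $\mathcal{S}(x,y) = (a+1-x,\, n-y)$, we recognize it as the point reflection through the center $\left(\tfrac{a+1}{2}, \tfrac{n}{2}\right)$ of $P_{a,n}$; in particular it is an involution. First I would isolate the three properties of $\mathcal{S}$ that do all the work. Since $\mathcal{S}$ is unimodular and maps $P_{a,n}$ onto itself, it restricts to a bijection of $interior(P_{a,n}) \cap \Z^2$. From the first coordinate $a+1-x$, the map $\mathcal{S}$ carries the vertical line $l_k$ onto the line $l_{a+1-k} = l_l$; combined with the preservation of interior lattice points this upgrades to the set equality $\mathcal{S}(S_k) = S_l$. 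Finally, the second coordinate $n-y$ is a strictly decreasing function of $y$, so $\mathcal{S}$ reverses vertical order along these lines.

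Putting these together, the lowest interior point on $l_k$ is sent to the highest interior point on $l_l$, that is,
$$\mathcal{S}(L(S_k)) = H(S_l).$$
Writing $L(S_k) = (k,y_0)$, the left side equals $(a+1-k,\, n-y_0) = (l,\, n-y_0)$, so this says $H(S_l) = (l,\, n-y_0)$, and adding the two points yields $L(S_k)+H(S_l) = (k+l,\, n) = (a+1,\, n)$, the claimed identity. The companion relation $L(S_k) = \mathcal{S}(H(S_l))$ then comes for free by applying the involution $\mathcal{S}$ to both sides of $H(S_l) = \mathcal{S}(L(S_k))$.

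For the last assertion, note that $\mathcal{S}$ permutes the interior lattice points of $P_{a,n}$, hence fixes their convex hull $P_{a,n}^{(1)}$ setwise. Being an affine bijection, $\mathcal{S}$ carries $\partial P_{a,n}^{(1)}$ onto itself. Therefore $L(S_k)$ lies on the boundary of $P_{a,n}^{(1)}$ if and only if its image $\mathcal{S}(L(S_k)) = H(S_l)$ does.

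I do not expect a genuine obstacle here: the argument is a direct unwinding of the explicit form of $\mathcal{S}$. The only point demanding care is the set equality $\mathcal{S}(S_k) = S_l$ (as opposed to the weaker statement that individual points of $S_k$ merely land on the line $l_l$); establishing it requires invoking that $\mathcal{S}$ preserves both the predicate ``interior'' and the predicate ``lattice point,'' which is precisely the content of Lemma~\ref{isometry}.
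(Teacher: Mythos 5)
Your proof is correct, and it follows exactly the route the paper intends: the paper states this lemma without proof, treating it as an immediate consequence of the central symmetry $\mathcal{S}$ of Lemma~\ref{isometry}, which is precisely what you unwind (the involution property, $\mathcal{S}(S_k)=S_l$, order reversal in the $y$-coordinate, and invariance of $P_{a,n}^{(1)}$ and its boundary under the affine bijection $\mathcal{S}$). No gaps; your care over the set equality $\mathcal{S}(S_k)=S_l$ is exactly the right point to make explicit.
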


 The following {\sc Maple} code draws $P_{a,n}^{(1)}$. However, the pictures it generates are not particularly enlightening.

\begin{verbatim}
    chullpict := proc(a::integer, n::integer) local P, i; 
    P := {[1, 1]}; for i from 2 to n - 1 do 
    P := P union {[a*i/n + frac((n - a)*i/n), i]}; 
    end do; convexhull(P, output = plot); end proc;
\end{verbatim}

In the appendix we give Python code that generates more informative pictures of $P_{a,n}^{(1)}$. The reader may want to jump ahead to the appendix to see a picture of $P_{11,29}^{(1)},$ Figure~$\ref{fig:P11,29}$.

\section{Stark's geometric exposition of the \\ continued fraction algorithm}

In this aside we give a synopsis of Stark's approach to continued fractions presented in chapter 7 of his textbook~\cite{S}.   It is an elaboration on Klein's geometric interpretation of the continued fraction of an irrational number; see~\cite[Chapter 4, Section 12]{D}. This geometric approach is also described 
in Arnold's textbook~\cite[Chapter 3, Section 11, pages 112--113]{A} and Irwin's expository paper~\cite{I}. Stark's treatment is by far the most detailed and expository. Irwin's approach is a streamlined version of Stark, whereas Arnold (as was his wont) is exceedingly terse.

Klein's visualization of continued fractions is as follows. Let $\alpha$ be a positive number. (Typically $\alpha$ is irrational and consequently has an infinite continued fraction expansion. However, what we are describing also applies when $\alpha$ is rational with the only difference being that the continued fraction expansion is then finite.)

Consider the line 
$$y = \alpha x.$$
Let $A/B$ be a convergent arising from the continued fraction of $\alpha$. Then the lattice point $(B,A)$ is the lattice point  closest to the line $y = \alpha x$ among all the lattice points in the first quadrant that have $x$-coordinate less than or equal to $B$; that is, if a lattice point $(B^\prime,A^\prime)$, with $A^\prime, B^\prime >0$,  is closer to $y =\alpha x$ than $(B,A)$, then $B^\prime>B$.

Let
$$\frac{A_0}{B_0}, \frac{A_1}{B_1}, \ldots,\frac{A_k}{B_k}, \ldots$$
denote the convergents of $\alpha$. A basic inequality in continued fractions is 
$$\frac{A_0}{B_0}< \frac{A_2}{B_2}< \frac{A_4}{B_4} \ldots < \alpha< \ldots <
\frac{A_5}{B_5} <\frac{A_3}{B_3} < \frac{A_1}{B_1}.$$
Consequently we can view the lattice points with even subscripts, $$(B_0,A_0), (B_2,A_2), (B_4,A_4), \ldots,$$ as lying to the \emph{right} of the line $y= \alpha x$ (or lying below the line), and the lattice points with odd subscripts,
 $$(B_1,A_1), (B_3,A_3), (B_5,A_5), \ldots,$$ as lying to the \emph{left} of the line $y=\alpha x$ (or lying above the line). This visual of left and right (or above or below) will play an important role in the proof of our main result.

With this geometric viewpoint, Stark rephrased the continued fraction algorithm in the following manner.

\smallskip

\noindent {\bf CF algorithm.}

\begin{enumerate}

\item[Step 1.] Set $\mathbf{v}_{-2}=(1,0), \mathbf{v}_{-1}=(0,1).$

\item[Step 2.] Let $\mathbf{v}_0 = \mathbf{v}_{-2} + q_0\mathbf{v}_{-1}$ where $q_0$ is the largest non-negative integer 
such that 
$\mathbf{v}_{-2} + q_0\mathbf{v}_{-1}$ lies below the line $y= \alpha x$, but $\mathbf{v}_{-2} +(q_0+1)\mathbf{v}_{-1}$ 
lies above the line $y =\alpha x$.

\item[Step 3.] In general $\mathbf{v}_i = \mathbf{v}_{i-2}+ q_i\mathbf{v}_{i-1}$ where $q_i$ is the largest integer such that 
$\mathbf{v}_{i-2}+ q_i\mathbf{v}_{i-1}$  lies on the same side of the line $y = \alpha x$  as $\mathbf{v}_{i-2}$, but 
$\mathbf{v}_{i-2}+ (q_i+1)\mathbf{v}_{i-1}$ lies on the opposite side of $y = \alpha x$.

\item[Step 4.] The algorithm terminates if $\mathbf{v}_i$ lies on the line $y =\alpha x$.

\end{enumerate}

For the rest of this section we will use the notation introduced in the above algorithm. A basic theorem on continued fractions can be rephrased as follows.

\begin{theorem}\label{converge det}
For $i=-2,-1,0,1\dots$,
\begin{equation}
\det\left(\mathbf{v}_i,\mathbf{v}_{i+1}\right) = (-1)^i.
\end{equation}
Consequently, the parallelogram spanned by $\mathbf{v}_i$ and $\mathbf{v}_{i+1}$ does not contain any non-vertex  lattice points, and 
$\Z\mathbf{v}_i \oplus \Z\mathbf{v}_{i+1} =\Z^2$.
\end{theorem}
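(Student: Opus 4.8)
The plan is to establish the determinant identity by induction on $i$, and then to read off the two ``consequently'' assertions from the quotient-group area formula stated earlier in the excerpt.

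\medskip

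\noindent\textbf{Base case.} For $i=-2$ the algorithm sets $\mathbf{v}_{-2}=(1,0)$ and $\mathbf{v}_{-1}=(0,1)$, so directly from the definition of the $2\times 2$ determinant,
$$\det(\mathbf{v}_{-2},\mathbf{v}_{-1}) = 1 = (-1)^{-2}.$$

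\noindent\textbf{Inductive step.} Assume $\det(\mathbf{v}_{i-1},\mathbf{v}_i)=(-1)^{i-1}$. Step~3 of the CF algorithm gives the recursion $\mathbf{v}_{i+1}=\mathbf{v}_{i-1}+q_{i+1}\mathbf{v}_i$. Using that the determinant is bilinear and alternating in its two vector arguments, I would expand
$$\det(\mathbf{v}_i,\mathbf{v}_{i+1}) = \det(\mathbf{v}_i,\mathbf{v}_{i-1}+q_{i+1}\mathbf{v}_i) = \det(\mathbf{v}_i,\mathbf{v}_{i-1}) + q_{i+1}\det(\mathbf{v}_i,\mathbf{v}_i).$$
The last term vanishes since a determinant with two equal columns is zero, and by antisymmetry together with the inductive hypothesis, $\det(\mathbf{v}_i,\mathbf{v}_{i-1}) = -\det(\mathbf{v}_{i-1},\mathbf{v}_i) = -(-1)^{i-1} = (-1)^i$. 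This closes the induction and proves the stated identity for every index produced by the algorithm.

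\medskip

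For the ``consequently'' clause, note that $|\det(\mathbf{v}_i,\mathbf{v}_{i+1})|=1$, so the parallelogram spanned by $\mathbf{v}_i$ and $\mathbf{v}_{i+1}$ has area $1$. The quotient-group area formula stated above then gives $\#\bigl((\Z\oplus\Z)/(\Z\mathbf{v}_i\oplus\Z\mathbf{v}_{i+1})\bigr)=1$, which forces $\Z\mathbf{v}_i\oplus\Z\mathbf{v}_{i+1}=\Z^2$. Since this sublattice is all of $\Z^2$, a (half-open) fundamental parallelogram for it contains exactly one lattice point, so the spanning parallelogram has no lattice points other than its four vertices; this is precisely the remaining assertion.

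\medskip

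I expect this argument to be essentially routine, so there is no serious obstacle. The only genuine care needed is bookkeeping: keeping the order of the arguments in $\det(\cdot,\cdot)$ (and hence the sign) consistent throughout, and indexing the recursion correctly so that $\mathbf{v}_{i+1}$ is expressed via $\mathbf{v}_{i-1}$ and $\mathbf{v}_i$ rather than $\mathbf{v}_{i-2}$. One should also note that the claim concerns only those indices actually generated before the algorithm halts (Step~4), so no vacuous or out-of-range index enters the induction.
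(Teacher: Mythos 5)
Your proof is correct. The paper states this theorem without proof, presenting it as a rephrasing of a standard fact about continued-fraction convergents, and your induction --- base case $\det(\mathbf{v}_{-2},\mathbf{v}_{-1})=1$, then bilinearity and antisymmetry applied to the recursion $\mathbf{v}_{i+1}=\mathbf{v}_{i-1}+q_{i+1}\mathbf{v}_i$ --- is exactly the standard argument being invoked; the two ``consequently'' clauses are likewise correctly deduced from the quotient-group cardinality theorem stated earlier in the paper.
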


Let $a,n \in \Z^+$ with $a<n$ and $\gcd(a,n)=1$. Let $\mathbf{v}_{-2} =(1,0),$ and $\mathbf{v}_{-1}=(0,1)$. We express the continued fraction expansion of $n/a$ as
\begin{eqnarray*}
(B_0,A_0)=\mathbf{v}_0  & = &  \mathbf{v}_{-2}+ q_0\mathbf{v}_{-1}\\
(B_1,A_1)= \mathbf{v}_1 & = & \mathbf{v}_{-1} + q_1\mathbf{v}_0\\
(B_2,A_2)= \mathbf{v}_2 & = & \mathbf{v}_{0} + q_2\mathbf{v}_1\\
& \vdots & \\
(B_{m-1},A_{m-1})= \mathbf{v}_{m-1} &= & \mathbf{v}_{m-3}+ q_{m-1}\mathbf{v}_{m-2}\\
(a,n)= \mathbf{v}_m &= & \mathbf{v}_{m-2}+ q_m\mathbf{v}_{m-1}.
\end{eqnarray*}

\begin{figure}
\begin{center}
\begin{tikzpicture}[scale=1.1]
\coordinate [label=below left:0] (0) at (0,0);

\node at (3,1) {$l$};

\draw[-] (0,0) -- (9,0);
\draw[-] (0,0) -- (0,2);
\draw[-] (0,2) -- (9,2);
\draw[-] (9,0) -- (9,2);
\draw[-] (7,0) -- (7,2);
\draw [thick] (0,0) -- (10,2.5);
\node at (10.2,2.75) {$\mathbf{v}_m =(a,n)$};

\draw[fill] (0,0) circle [radius=0.08];
\draw (2,0) node[cross]{}; \draw (0,2) node[cross]{}; \draw (7,0) node[cross]{}; \draw (7,2) node[cross]{}; \draw (9,0) node[cross]{};
\draw (9,2) node[cross]{}; \draw (10,2.5) node[cross]{};
\node at (2,-0.25) {$\mathbf{v}_{i-1}$};
\node at (0,2.25) {$\mathbf{v}_{i-2}$};
\node at (7,2.25) {$\mathbf{v}_{i}$};
\node at (7,-.25) {$q_i\mathbf{v}_{i-1}$};
\node at (9,-.25) {$(q_i+1)\mathbf{v}_{i-1}$};

\end{tikzpicture}
\end{center}
\caption{The CF algorithm}
\label{fig:CF algorithm}
\end{figure}
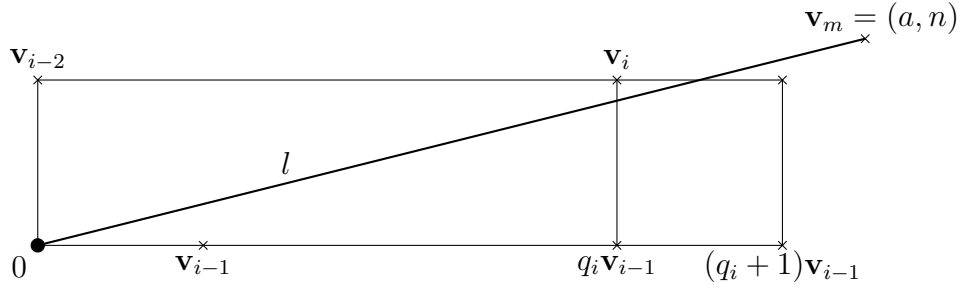

Figure~\ref{fig:CF algorithm} is a visual illustration of the continued fraction of algorithm. We warn the reader that we have taken some artistic liberties. We have chosen the vector $\mathbf{v}_{i-1}$ to be horizontal and the vector $\mathbf{v}_{i-2}$ to be vertical; in reality they both have positive slope. Furthermore we have only indicated some of the lattice points that lie on the boundary of the parallelogram with vertices $(0,0), \mathbf{v}_{i-2}, \mathbf{v}_i$ and $q_i\mathbf{v}_{i-1}$. There are no lattice points on the line segment 
connecting $(0,0)$ to $\mathbf{v}_{i-2}$ other than the endpoints. The same remark holds for the line segment connecting $q_i\mathbf{v}_{i-1}$ to $\mathbf{v}_i$. However, on the line segment connecting $(0,0)$ to 
$q_i\mathbf{v}_{i-1}$ there are $q_i-1$ lattice points in addition to the end points. The same remark holds for the line segment 
connecting $\mathbf{v}_{i-2}$ to $\mathbf{v}_i$. 

The following little lemma shows that $q_m \geq 2$, and consequently there is at least one lattice point lying on the open line segment connecting $\mathbf{v}_{m-2}$ and $\mathbf{v}_m$. We will need this observation when we are determining the vertices of $P_{a,n}.$

\begin{lemma}\label{last-par-quot}
We recall that $[q_0,q_1,q_2,\ldots,q_m]$ is the continued fraction of $n/a$. We  have the inequality  
\begin{equation}
q_m \geq 2.
\end{equation}
\end{lemma}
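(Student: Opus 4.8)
The plan is to rule out $q_m = 1$ by keeping track of which side of the line $y = (n/a)x$ the relevant lattice points fall on, using only the bookkeeping built into the CF algorithm. First I would dispose of the degenerate case $m=0$: then $n/a = q_0$ is an integer, and since $\gcd(a,n)=1$ this forces $a=1$, whence $q_0 = n \geq 2$ because $n>1$. So from now on assume $m \geq 1$, which guarantees that $\mathbf{v}_{m-3}$ is defined. The entire argument then reduces to one claim: the lattice point $\mathbf{v}_{m-2}+\mathbf{v}_{m-1}$ lies \emph{strictly} on the same side of the line as $\mathbf{v}_{m-2}$.

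Granting the claim, the conclusion follows cleanly. Consecutive convergents lie on opposite sides of the line (the even-indexed ones below, the odd-indexed ones above), so $\mathbf{v}_{m-1}$ sits on the side opposite to $\mathbf{v}_{m-2}$. Consequently the points $\mathbf{v}_{m-2}+j\mathbf{v}_{m-1}$ for $j=0,1,2,\dots$ move monotonically toward and across the line, meeting it for the first time exactly at $j=q_m$, which is where $\mathbf{v}_m=(a,n)$ lands. The claim says that at $j=1$ we are still strictly on $\mathbf{v}_{m-2}$'s side, so $j=1$ is not yet the crossing; hence $q_m \geq 2$.

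To prove the claim I would rewrite the point using the recursion $\mathbf{v}_{m-1}=\mathbf{v}_{m-3}+q_{m-1}\mathbf{v}_{m-2}$ from the CF algorithm, obtaining $\mathbf{v}_{m-2}+\mathbf{v}_{m-1}=\mathbf{v}_{m-3}+(q_{m-1}+1)\mathbf{v}_{m-2}$. By the defining property of $q_{m-1}$ at step $m-1$, this is precisely the point that first overshoots the line as one adds successive copies of $\mathbf{v}_{m-2}$ to $\mathbf{v}_{m-3}$; thus it lands on the side opposite to $\mathbf{v}_{m-3}$, which (again by alternation of consecutive convergents) is the side of $\mathbf{v}_{m-2}$.

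The one delicate point — and the step I expect to be the genuine obstacle — is to verify that this overshoot is \emph{strict}, i.e.\ that $\mathbf{v}_{m-3}+(q_{m-1}+1)\mathbf{v}_{m-2}$ does not land exactly on the line. This matters because a determinant computation alone cannot exclude $q_m=1$: the alternative ending $[q_0,\dots,q_{m-1},1]$ is perfectly consistent with Theorem~\ref{converge det}, so the obstruction must come from the dynamics of the algorithm rather than from an identity. The resolution is the termination rule: if $\mathbf{v}_{m-3}+(q_{m-1}+1)\mathbf{v}_{m-2}$ were on the line, the algorithm would already have halted at step $m-1$, contradicting $m-1<m$. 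Hence the overshoot is strict, the claim holds, and $q_m \geq 2$. As an independent sanity check one may recover the same fact arithmetically: running the Euclidean algorithm on the pair $(n,a)$, the final quotient equals the penultimate remainder $r_{m-1}$, which strictly exceeds the last nonzero remainder $r_m=\gcd(a,n)=1$, so $q_m=r_{m-1}\geq 2$.
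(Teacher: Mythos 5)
Your proof is correct, but it takes a genuinely different route from the paper's. The paper's proof is purely arithmetic and three lines long: run the Euclidean algorithm on $(n,a)$; since $\gcd(a,n)=1$ the last nonzero remainder is $r_{m-1}=1$, the final division reads $r_{m-2}=q_m r_{m-1}$, and the strict decrease of the remainders gives $q_m=r_{m-2}>1$. That is exactly the ``sanity check'' in your closing paragraph (up to an index shift), so you rediscovered the paper's argument as an afterthought; your main argument is the new one. It stays inside Stark's geometric algorithm: if $q_m=1$, then $(a,n)=\mathbf{v}_{m-2}+\mathbf{v}_{m-1}=\mathbf{v}_{m-3}+(q_{m-1}+1)\mathbf{v}_{m-2}$, i.e.\ the overshoot point of step $m-1$ would lie on the line, so the algorithm would have halted one step earlier. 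You also correctly isolate the crux: no identity such as Theorem~\ref{converge det} can exclude the alternative expansion $[q_0,\ldots,q_{m-1},1]$, so the obstruction must come from the greedy/termination convention of the algorithm. The trade-off is that your argument depends on reading Steps 3--4 in the closed half-plane sense, where a lattice point landing on the line ends both the step and the algorithm; under the literal strict-inequality reading of Step 3, no $q_i$ at all satisfies the stated condition at the terminal step (try $n/a=7/5$: $(3,4)$ lies below the line but $(5,7)$ lies on it, not above it), so the algorithm is only well posed under the reading you adopt, and to your credit you invoke it explicitly rather than silently. The paper's Euclidean proof is immune to this interpretive subtlety, because the canonical continued fraction of $n/a$ is by definition what the division algorithm produces; your proof, in exchange, establishes the inequality as an intrinsic dynamical property of the geometric algorithm, which is more in the spirit of the rest of the paper. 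Your separate treatment of the degenerate case $m=0$ (where $a=1$ and $q_0=n\geq 2$) handles explicitly a case that the paper's displayed Euclidean scheme subsumes without comment.
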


\begin{proof}
We apply the Euclidean algorithm with $n$ and $a$. \begin{eqnarray*}
n & = & q_0a+r_0 \\
a & = & q_1r_0 + r_1 \\
\vdots & \vdots &\vdots \\
r_{m-4} & = & q_{m-2}r_{m-3}+r_{m-2} \\
r_{m-3} & = & q_{m-1}r_{m-2}+ r_{m-1} \\
r_{m-2} & = & q_m r_{m-1}, \\
\end{eqnarray*}
with 
$$r_0 > r_1 > \ldots > r_{m-2} > r_{m-1}.$$
Since, $r_{m-1} =1 $, we conclude 
$q_m =r_{m-2} > 1.$
\end{proof}

\section{Continued fractions and the boundary of $P_{a,n}^{(1)}$}

In this section we prove our main result. We implement Stark's approach to show how the continued fraction of $n/a$ gives the vertices of $P_{a,n}^{(1)}$. 

\subsection{A menagerie of notations and preliminary results}

In this section we discuss some results about the convergents of a continued fraction. Some of them will be used in the proof of our main theorem; others are there to illuminate the geometry.

For $i=0,1,\ldots,m-1$ let 
$$ d_i = \textrm{ distance from } (B_i, A_i) \textrm{ to the line } y = \frac{n}{a} x.$$
The next result shows that we can view the distances, $d_i$, as successive ``minima". 

\begin{theorem}\label{dist-prop}
We have the strictly decreasing  sequence 
\begin{equation}
\frac{\textrm{width}(P_{a,n})}{2}= \frac{n}{2\sqrt{n^2+a^2}} > d_0 > d_1 > d_2 > \ldots > d_{m-1} > 0.
\end{equation}

Furthermore, let $(\beta,\alpha)$ be a lattice point in the interior of the rectangle with vertices $(0,0), (0,n), (a,0)$ and $(a,n)$ satisfying the following conditions:
$$0<\beta \leq B_i, \, (\beta,\alpha) \not= (B_{i-1},A_{i-1}), \, (\beta,\alpha) \not=(B_{i},A_{i}).$$
Then, $d> d_{i-1}$, where d is the distance of $(\beta,\alpha)$ to the line $y=(n/a)x.$
\end{theorem}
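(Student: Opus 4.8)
The plan is to read everything through the linear functional $\ell(\mathbf{x}) = nx_1 - ax_2$, which vanishes exactly on the line $y = (n/a)x$. Since $\ell(x_1,x_2) = \det\!\big((x_1,x_2),(a,n)\big) = \det\!\big((x_1,x_2),\mathbf{v}_m\big)$, the unsigned distance of any point to the line is $|\ell(\cdot)|/\sqrt{n^2+a^2}$; in particular $d_i = D_i/\sqrt{n^2+a^2}$, where $D_i := |\ell(\mathbf{v}_i)| = |nB_i - aA_i|$. The two sides of $P_{a,n}$ of slope $n/a$ lie on $\ell = 0$ and $\ell = n$, so $\textrm{width}(P_{a,n}) = n/\sqrt{n^2+a^2}$, which fixes the leading term of the chain.

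First I would establish the strictly decreasing chain. Applying the linear map $\ell$ to the recurrence $\mathbf{v}_i = \mathbf{v}_{i-2} + q_i\mathbf{v}_{i-1}$ gives $\ell(\mathbf{v}_i) = \ell(\mathbf{v}_{i-2}) + q_i\ell(\mathbf{v}_{i-1})$, with base values $\ell(\mathbf{v}_{-2}) = n$ and $\ell(\mathbf{v}_{-1}) = -a$. By the left/right dichotomy recalled after Klein's picture, the $\mathbf{v}_i$ alternate sides of the line, so $\ell(\mathbf{v}_i)$ has sign $(-1)^i$; taking absolute values collapses the recurrence to the Euclidean relation $D_{i-2} = q_i D_{i-1} + D_i$ with $0 \le D_i < D_{i-1}$. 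This is precisely the Euclidean algorithm from the proof of Lemma~\ref{last-par-quot}, so $D_i = r_i$ and the remainders descend strictly to $D_{m-1} = r_{m-1} = 1 > 0$; dividing by $\sqrt{n^2+a^2}$ yields $d_0 > \cdots > d_{m-1} > 0$. The leading inequality $\textrm{width}(P_{a,n})/2 > d_0$ is then just $r_0 < n/2$, which follows from $n = q_0 a + r_0 \ge a + r_0 > 2r_0$, using $q_0 \ge 1$ and $r_0 < a$.

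For the second assertion I would exploit that consecutive convergents form a lattice basis: by Theorem~\ref{converge det}, $\det(\mathbf{v}_{i-1},\mathbf{v}_i) = \pm 1$, so every lattice point has a unique expression $(\beta,\alpha) = s\mathbf{v}_{i-1} + t\mathbf{v}_i$ with $s,t \in \Z$. Applying $\ell$ and using that $\ell(\mathbf{v}_{i-1})$ and $\ell(\mathbf{v}_i)$ have opposite signs, one gets $|\ell(\beta,\alpha)| = |sD_{i-1} - tD_i|$, while the $x$-coordinate reads $\beta = sB_{i-1} + tB_i$. The argument is then a short case split on the sign of $t$. If $t \le 0$, then $0 < \beta$ forces $s \ge 1$, whence $sD_{i-1} - tD_i \ge D_{i-1}$ with equality only at $(s,t)=(1,0)$, the point $\mathbf{v}_{i-1}$. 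If $t \ge 1$, then $\beta \le B_i$ forces $s \le 0$; here $s = 0$ collapses to $(s,t)=(0,1)$, the point $\mathbf{v}_i$, while $s \le -1$ gives $|sD_{i-1} - tD_i| \ge D_{i-1} + D_i$. In every remaining case the surplus $D_i > 0$ (valid once $i \le m-1$) makes the inequality strict, i.e. $d > d_{i-1}$.

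The main obstacle is the sign bookkeeping: the entire argument hinges on the convergents genuinely alternating sides of the line, so that the absolute-value recurrence really is the Euclidean algorithm and the cross term $-tD_i$ carries the favorable sign in the basis expansion. A secondary point demanding care is the range of $i$: at $i = m$ one has $D_m = 0$, and the lattice point $\mathbf{v}_m - \mathbf{v}_{m-1}$ lies in the interior of the rectangle yet realizes distance exactly $d_{m-1}$, so strictness genuinely requires $D_i > 0$. This is why the clean statement lives in the range $1 \le i \le m-1$.
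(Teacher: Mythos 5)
Your proposal is correct, but there is nothing in the paper to compare it against: Theorem~\ref{dist-prop} is stated without proof, as a geometric rephrasing of the standard best-approximation properties of convergents that the paper imports from Stark's treatment. Judged as a self-contained argument, yours is complete and is essentially the classical proof, efficiently organized around the functional $\ell(x,y)=nx-ay=\det\bigl((x,y),(a,n)\bigr)$. The side-alternation of the $\mathbf{v}_i$ (which is forced by Step 3 of the CF algorithm, since $\mathbf{v}_i$ is kept on the same side as $\mathbf{v}_{i-2}$ while $\mathbf{v}_{i-1}$ lies on the opposite side) converts the recurrence $\ell(\mathbf{v}_i)=\ell(\mathbf{v}_{i-2})+q_i\ell(\mathbf{v}_{i-1})$ into the division relation $D_{i-2}=q_iD_{i-1}+D_i$, so the $D_i$ are exactly the remainders $r_i$ of the Euclidean algorithm displayed in the proof of Lemma~\ref{last-par-quot}; the chain, the positivity $d_{m-1}=1/\sqrt{n^2+a^2}>0$, and the leading inequality $n\geq a+r_0>2r_0$ all fall out. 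The second half, expanding $(\beta,\alpha)=s\mathbf{v}_{i-1}+t\mathbf{v}_i$ in the lattice basis guaranteed by Theorem~\ref{converge det} and using the opposite signs of $\ell(\mathbf{v}_{i-1})$ and $\ell(\mathbf{v}_i)$ to get $|\ell(\beta,\alpha)|=|s|D_{i-1}+|t|D_i$, is likewise the standard route. Only one step deserves an explicit line: in the case $t\geq 1$, $s=0$, it is the constraint $\beta=tB_i\leq B_i$ that forces $t=1$ and hence $(\beta,\alpha)=\mathbf{v}_i$; you assert the collapse without saying why.

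Your closing remark about $i=m$ is not a defect of your proof but of the theorem as literally stated, and it is a genuine catch. When $i=m$ the point $\mathbf{v}_m-\mathbf{v}_{m-1}=(a-B_{m-1},\,n-A_{m-1})$ lies in the interior of the rectangle, satisfies $0<\beta\leq B_m$, differs from both $\mathbf{v}_{m-1}$ and $\mathbf{v}_m$, and yet its distance to the line is exactly $d_{m-1}$: for $n/a=5/2$ (so $m=1$) the point $(1,3)=\mathbf{v}_1-\mathbf{v}_0$ has distance $1/\sqrt{29}=d_0$. So the strict conclusion $d>d_{i-1}$ requires $D_i>0$, i.e.\ $1\leq i\leq m-1$; at $i=m$ the correct statement is $d\geq d_{m-1}$, with equality precisely at $\mathbf{v}_m-\mathbf{v}_{m-1}$. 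Either restriction repairs the theorem, and the weaker form at $i=m$ still supports the way the result is invoked later (in the proof of Lemma~\ref{various-incl} a \emph{strictly} closer lattice point is produced to reach a contradiction).
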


We now use the lines 
$y = (n/a)(x-0.5)$ and and $y=n/2$ 
 to subdivide $P_{a,n}$ into four smaller parallelograms $P_1,P_2, P_3$ and $P_4$ (see Figure~\ref{fig:line picture}). These four parallelograms provide a helpful visual partition for the vertices of $P^{(1)}_{a,n}$.  We also introduce two numerical quantities $E(m)$ and $O(m)$. Let 
\begin{equation} E(m) = \textrm{ largest even integer }< m \end{equation}
and
\begin{equation} O(m)= \textrm{ largest odd integer } < m.
\end{equation}
 
\begin{lemma}\label{various-incl}
Let $V_1,V_2$ denote the sets 
$$V_1= \{ (B_0,A_0), (B_2, A_2), (B_4, A_4), \ldots, (B_{E(m)}, A_{E(m)})\}, $$
$$V_2= \{(B_1+1, A_1), (B_3 + 1, A_3), (B_5+1,A_5), \ldots, (B_{O(m)} +1, A_{O(m)}) \} $$
respectively, and recall the unimodular map $\mathcal S$ in Lemma~\ref{isometry} given by ${\mathcal S}((x,y)) = (a+1,n)-(x,y).$
Then
$$ V_1 \subset P_1, \, {\mathcal S}(V_1) \subset P_3, \, V_2\subset P_2, \textrm{ and } {\mathcal S}(V_2) \subset P_4.$$ 
\end{lemma}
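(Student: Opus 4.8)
The plan is to translate membership in each of the four sub-parallelograms into a pair of explicit linear inequalities and then verify these for the convergent points, reading off the left/right inequalities from the distance estimate of Theorem~\ref{dist-prop} and the top/bottom inequality from the bound $q_m\ge 2$ of Lemma~\ref{last-par-quot}. First I would record the equations of the dividing lines: the midline $y=(n/a)(x-0.5)$ is $nx-ay=n/2$, the left edge $y=(n/a)x$ is $nx-ay=0$, and the right edge $y=(n/a)(x-1)$ is $nx-ay=n$, while the other divider is $y=n/2$. Thus $P_1$ (lower-left) is cut out by $0\le nx-ay\le n/2$ together with $0\le y\le n/2$, and $P_2$ (lower-right) by $n/2\le nx-ay\le n$ together with $0\le y\le n/2$.

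Before verifying anything I would dispose of the $P_3$ and $P_4$ inclusions using the isometry $\mathcal S$ of Lemma~\ref{isometry}. Since $\mathcal S(x,y)=(a+1,n)-(x,y)$ is the central symmetry about the center $((a+1)/2,\,n/2)$ of $P_{a,n}$, and this center lies on both dividing lines, $\mathcal S$ fixes each dividing line setwise and hence permutes the four pieces; a direct check that it reverses the signs of both $nx-ay-n/2$ and $y-n/2$ shows $\mathcal S(P_1)=P_3$ and $\mathcal S(P_2)=P_4$. Consequently $\mathcal S(V_1)\subset P_3$ and $\mathcal S(V_2)\subset P_4$ follow immediately once $V_1\subset P_1$ and $V_2\subset P_2$ are established, so the whole lemma reduces to these two inclusions.

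For the left/right inequalities I would use the sign and size of $nB_i-aA_i$. For even $i$ the point $(B_i,A_i)$ lies below the line $y=(n/a)x$, giving $nB_i-aA_i>0$, while the bound $d_i<\tfrac{n}{2\sqrt{n^2+a^2}}$ of Theorem~\ref{dist-prop} is exactly $nB_i-aA_i<n/2$; together these place $(B_i,A_i)$ strictly between the left edge and the midline. For odd $i$ the point lies above the line, so $aA_i-nB_i>0$ and (again by Theorem~\ref{dist-prop}) $aA_i-nB_i<n/2$; translating by $(1,0)$ sends $nx-ay$ to $n(B_i+1)-aA_i=n-(aA_i-nB_i)$, which therefore lies strictly in $(n/2,\,n)$, placing $(B_i+1,A_i)$ between the midline and the right edge. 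This settles the left/right half of the $P_1$ and $P_2$ conditions.

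The remaining, and main, point is the bottom-half bound $A_i\le n/2$, which the distance theorem does not supply and which I expect to be the crux. Here I would show that \emph{every} convergent of index at most $m-1$ lands in the lower half. From the final step $(a,n)=\mathbf v_m=\mathbf v_{m-2}+q_m\mathbf v_{m-1}$ I read off $n=A_{m-2}+q_mA_{m-1}$; since $q_m\ge 2$ by Lemma~\ref{last-par-quot} and $A_{m-2}\ge 1$ (with the convention $A_{-1}=1$), this forces $n>2A_{m-1}$, i.e. $A_{m-1}<n/2$. Because the numerators satisfy $A_0<A_1<\cdots<A_{m-1}$, it follows that $A_i<n/2$ for all $0\le i\le m-1$, hence for every index appearing in $V_1$ (even, at most $E(m)\le m-1$) and in $V_2$ (odd, at most $O(m)\le m-1$); the translation by $(1,0)$ does not change the $y$-coordinate. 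Combining this with the previous paragraph yields $V_1\subset P_1$ and $V_2\subset P_2$, and therefore the full statement. Finally I would note that for the smallest values of $m$ one or both of $V_1,V_2$ is empty, so those inclusions hold trivially.
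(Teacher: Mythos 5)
Your proof is correct and follows essentially the same route as the paper's: the vertical bound $A_i<n/2$ comes from $q_m\geq 2$ (Lemma~\ref{last-par-quot}) together with the monotonicity of the numerators, the lateral bounds come from the half-width inequality of Theorem~\ref{dist-prop} combined with the even/odd sidedness of the convergents, and the $P_3$, $P_4$ inclusions are reduced to ${\mathcal S}(P_1)=P_3$ and ${\mathcal S}(P_2)=P_4$. The only differences are cosmetic: you encode distances via the linear functional $nx-ay$ rather than Euclidean distance, and you omit the paper's separate verification that $(1,A_0)$ lies in the interior of $P_1$, which is indeed not needed once the chain of inequalities in Theorem~\ref{dist-prop} is taken as given.
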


\begin{proof}
We begin by showing that the points in $V_1$ and $V_2$ lie below the horizontal line $y=n/2$ and the points in ${\mathcal S}(V_1)$
and ${\mathcal S}(V_2)$ lie above the horizontal line $y=n/2$.

We make two observations: 
\begin{enumerate}
    \item The points in $V_1$ and $V_2$ lie to the right of the line $y=(n/a)x$ and to the left of the line $y=(n/a)(x-1)$. 
\item We have the following sequence of inequalities: 
$$ 1 \leq A_0 < A_1 < \ldots < A_{m-1} < A_m =n.$$
\end{enumerate}
The inequality $q_m \geq 2$ (Lemma~\ref{last-par-quot}) gives us the inequality 
$$ n = A_{m-2} + q_mA_{m-1} > q_mA_{m-1} \geq  2A_{m-1},$$ 
and we can conclude that 
$$n/2 > A_i \textrm{ for } i=0, \ldots, m-1.$$ 
Furthermore, the $y$-coordinates of the points in 
${\mathcal S}(V_1) \cup {\mathcal S}(V_2)$ are of the form 
$n-A_i$. Consequently the points in $V_1$ and $V_2$ lie below the horizontal line  $y=n/2$ and the points in ${\mathcal S}(V_1)$ and ${\mathcal S}(V_2)$ lie above the line $y=n/2$.

We now show that the convergent $(1,A_0)$ lies in the interior of $P_1$. The point $(1,A_0)$ lies below the line $y=(n/a)x$ and does not lie on the line $y=(n/a)(x-1/2).$ If $(1,A_0)$ lay below the line $y=(n/a)(x-1/2)$, then the lattice point $(1,2A_0)$ would lie below the line $y=(n/a)x$ and would be \emph{closer} to $y=(n/a)x$ than $(1,A_0)$. This contradicts Theorem~\ref{dist-prop}. Consequently, $(1,A_0)$ must lie inside $P_1$. 

We now recall that the distances of the points in $V_1$ to the line $y = (n/a)x$ are $d_0,d_2,\ldots, d_{E(m)}$ and the distances of the points in $V_2$ to the line $y=(n/a)(x-1)$ 
are $d_1,d_3, \ldots, d_{O(m)}$. Since 
$$\textrm{width}(P_1) =\textrm{width}(P_2) > d_0 > d_1 > \ldots > d_{m-1},$$ 
we conclude that $V_1 \subset P_1$ and $V_2 \subset P_2$. Finally a routine check shows that ${\mathcal S}(P_1) = P_3$ and 
${\mathcal S}(P_2) = P_4$.
\end{proof}

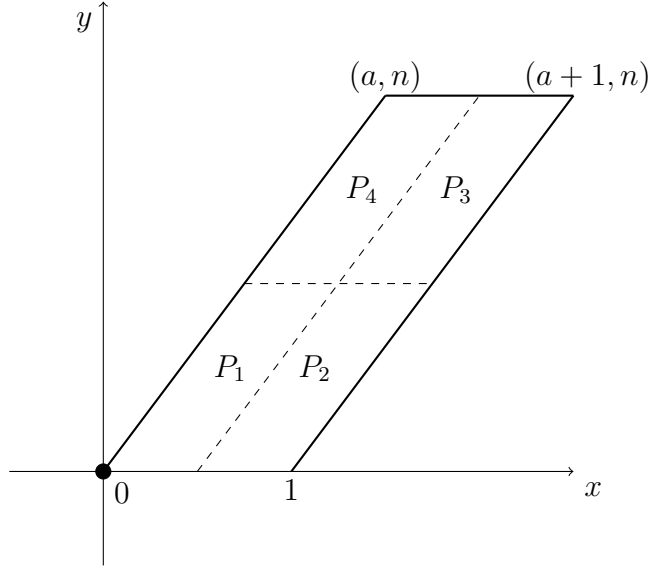
\begin{figure}

\begin{center}
\begin{tikzpicture}[scale=1.25]
\coordinate [label=below right:0] (0) at (0,0);
\coordinate [label=below right:$x$] ($x$) at (5,0);
\coordinate [label=below left:$y$] ($y$) at (0,5);

\draw[->] (-1,0) -- (5,0);
\draw[->] (0,-1) -- (0,5) ;
\draw [thick] (0,0) --(3,4);
\node at (3,4.2) {$(a,n)$};
\draw [dashed] (1,0) -- (4,4);
\draw[thick](2,0)--(5,4);
\node at (5.15,4.2){$(a+1,n)$};
\draw[thick] (3,4) -- (5,4);
\draw[dashed] (1.5,2) -- (3.5,2);
\draw[fill] (0,0) circle [radius=0.08];
\node at (1.35,1.1) {$P_1$};
\node at (2.25, 1.1) {$P_2$};
\node at (3.75,3.0) {$P_3$};
\node at (2.75,3.0) {$P_4$};
\node at (2,-.2) {1};

\end{tikzpicture}
\end{center}
\caption{Subdividing $P_{a,n}$}
\label{fig:line picture}
\end{figure}

We now prove 3 small results that we will use to prove our main theorem.
The next lemma follows immediately from examining Figure~\ref{fig:CF algorithm} and applying Theorem~\ref{converge det}. However, we apply Pick's theorem to give a proof.

\begin{lemma}\label{no-int-pt}
For $i=0,1\ldots,m$, consider the triangle, $T_i$, with vertices $(0,0), \mathbf{v}_{i-2},$ and $\mathbf{v}_{i}$. Then we have the following:
\begin{enumerate}
\item There are $q_i+1$ lattice points on the line segment connecting $\mathbf{v}_{i-2}$ to $\mathbf{v}_i$.
\item There are no lattice points in the interior of $T_i$.
\item All of the non-vertex lattice points in $T_i$  lie on the line segment connecting $\mathbf{v}_{i-2}$ to $\mathbf{v}_i$.
\end{enumerate}
\end{lemma}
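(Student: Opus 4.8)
The plan is to derive all three parts from a single application of Pick's theorem, after determining the area of $T_i$ and the lattice points on each of its three sides. I would establish part (1) first, since it also records the boundary data needed afterwards. The key identity is $\mathbf{v}_i-\mathbf{v}_{i-2}=q_i\mathbf{v}_{i-1}$, which is exactly the recurrence from the CF algorithm. Theorem~\ref{converge det} gives $\det(\mathbf{v}_{i-2},\mathbf{v}_{i-1})=(-1)^{i}$, so $\mathbf{v}_{i-1}$ is a primitive lattice vector; hence the lattice points on the closed segment from $\mathbf{v}_{i-2}$ to $\mathbf{v}_i=\mathbf{v}_{i-2}+q_i\mathbf{v}_{i-1}$ are precisely $\mathbf{v}_{i-2}+j\mathbf{v}_{i-1}$ for $j=0,1,\ldots,q_i$, which are the claimed $q_i+1$ points.

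Next I would compute the area. By bilinearity of the determinant together with the recurrence,
$$\det(\mathbf{v}_{i-2},\mathbf{v}_i)=\det(\mathbf{v}_{i-2},\mathbf{v}_{i-2}+q_i\mathbf{v}_{i-1})=q_i\det(\mathbf{v}_{i-2},\mathbf{v}_{i-1})=\pm q_i,$$
so $\textrm{area}(T_i)=q_i/2$. To count the boundary lattice points $B$, observe that Theorem~\ref{converge det} also forces $\mathbf{v}_{i-2}$ and $\mathbf{v}_i$ to be primitive (for $i=m$, primitivity of $\mathbf{v}_m=(a,n)$ is simply $\gcd(a,n)=1$). Thus the two sides joining the origin to $\mathbf{v}_{i-2}$ and to $\mathbf{v}_i$ contain no lattice points apart from their endpoints, while by part (1) the third side contributes $q_i-1$ lattice points in its relative interior. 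Counting the three vertices once, we get $B=3+(q_i-1)=q_i+2$.

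Finally I would substitute into Pick's theorem, $\textrm{area}(T_i)=I+B/2-1$, which becomes $q_i/2=I+(q_i+2)/2-1=I+q_i/2$ and forces $I=0$; this is part (2). Part (3) is then immediate, since a non-vertex lattice point of $T_i$ must lie either in the interior (ruled out by part (2)) or on the boundary, and the only side carrying non-vertex lattice points is the segment from $\mathbf{v}_{i-2}$ to $\mathbf{v}_i$. I do not expect a genuine obstacle: once Theorem~\ref{converge det} is in hand the argument is essentially bookkeeping. The only places needing a little care are the base cases $i=0,1$, where one uses $\mathbf{v}_{-2}=(1,0)$ and $\mathbf{v}_{-1}=(0,1)$ (which are trivially primitive), and the verification that $T_i$ is nondegenerate, which holds because $q_i\ge 1$.
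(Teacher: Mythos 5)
Your proposal is correct and follows essentially the same route as the paper's own proof: the identity $\mathbf{v}_i-\mathbf{v}_{i-2}=q_i\mathbf{v}_{i-1}$ plus primitivity of the convergents gives the $q_i+1$ boundary points, the determinant computation via Theorem~\ref{converge det} gives $\textrm{area}(T_i)=q_i/2$, and Pick's theorem then forces $I=0$. Your write-up is slightly more explicit than the paper's about why the relevant vectors are primitive and about the degenerate/base cases, but the argument is the same.
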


\begin{proof}
The reader may want to refer to Figure~\ref{fig:CF algorithm}. We have that 
$$\mathbf{v}_i -\mathbf{v}_{i-2}= \mathbf{v}_{i-2} + q_i\mathbf{v}_{i-1}-\mathbf{v}_{i-2}= q_i(B_{i-1},A_{i-1}).$$
It follows that the number of lattice points on the line segment connecting $\mathbf{v}_{i-2}$ to $\mathbf{v}_i$ equals 
$$q_i\gcd(B_{i-1},A_{i-1}) +1 = q_i+1.$$
Furthermore, since $\mathbf{v}_{i-2}$ and $\mathbf{v}_i$ are convergents there are no non-vertex lattice points on the other two sides of $T_i$. Combining these two remarks we conclude that there are $q_i+2$ lattice points on the boundary of $T_i$.

We now have 
$$\textrm{area}(T_i) = |\det(\mathbf{v}_{i-2},\mathbf{v}_i)|/2
= |q_i\det(\mathbf{v}_{i-2},\mathbf{v}_{i-1})|/2.$$
Since $|\det(\mathbf{v}_{i-2},\mathbf{v}_{i-1})|=1$ (Theorem~\ref{converge det}), we have that 
$\textrm{area}(T_i) = q_i/2.$ We now invoke Pick's theorem to conclude that $T_i$ does not contain any interior lattice points, and consequently all of the non-vertex lattice points in $T_i$ lie on the edge connecting $\mathbf{v}_{i-2}$ to $\mathbf{v}_i$.
\end{proof}



{\bf Notation for piecewise linear paths}: We will use 
$\gamma(\mathbf{w}_1,\mathbf{w}_2,\ldots, \mathbf{w}_n)$
to denote the piecewise linear path that connects $\mathbf{w}_1$ to $\mathbf{w}_2$, $\mathbf{w}_2$ to $\mathbf{w}_3$, and so on and so forth.

\begin{lemma}\label{angle-at-conv}
For $i=0,\ldots,m-2$,
\begin{equation}
    \det \left( \begin{array}{ccc} 1 & B_{i-2} & A_{i-2}\\ 
    1 & B_i & A_i \\ 1 & B_{i+2} & A_{i+2} \end{array} \right) = 
    (-1)^{i-1}q_iq_{i+1}q_{i+2}.
\end{equation}
Consequently, when traversing the piecewise linear paths 
$$\gamma(\mathbf{v}_1, \mathbf{v}_3, \ldots, \mathbf{v}_{O(m)})
\textrm{ and } \gamma(\mathbf{v}_{E(m)},\mathbf{v}_{E(m)-2}, \ldots, \mathbf{v}_0),$$
we are always turning to the left.
\end{lemma}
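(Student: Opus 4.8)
The plan is to evaluate the $3\times3$ determinant by the standard device of subtracting the first row from each of the other two. Since the first column then becomes $(1,0,0)^{T}$, expansion reduces the problem to the $2\times2$ determinant
$$\det\begin{pmatrix} B_i - B_{i-2} & A_i - A_{i-2} \\ B_{i+2}-B_{i-2} & A_{i+2}-A_{i-2}\end{pmatrix} = \det\left(\mathbf{v}_i - \mathbf{v}_{i-2},\ \mathbf{v}_{i+2}-\mathbf{v}_{i-2}\right).$$
Geometrically this is twice the signed area of the triangle with vertices $\mathbf{v}_{i-2},\mathbf{v}_i,\mathbf{v}_{i+2}$, so its sign will encode the orientation of that ordered triple, which is precisely the turning information needed for the second assertion.

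Next I would rewrite the two difference vectors using the recurrence $\mathbf{v}_j = \mathbf{v}_{j-2} + q_j\mathbf{v}_{j-1}$. The first is immediate: $\mathbf{v}_i - \mathbf{v}_{i-2} = q_i\mathbf{v}_{i-1}$. For the second I would apply the recurrence to $\mathbf{v}_{i+2}$ and then to $\mathbf{v}_{i+1}$, and combine this with $\mathbf{v}_{i-2} = \mathbf{v}_i - q_i\mathbf{v}_{i-1}$, to obtain $\mathbf{v}_{i+2}-\mathbf{v}_{i-2} = q_{i+1}q_{i+2}\,\mathbf{v}_i + (q_i+q_{i+2})\,\mathbf{v}_{i-1}$. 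Substituting both expressions into the $2\times2$ determinant and using bilinearity, the term carrying $\det(\mathbf{v}_{i-1},\mathbf{v}_{i-1})$ vanishes, leaving $q_iq_{i+1}q_{i+2}\,\det(\mathbf{v}_{i-1},\mathbf{v}_i)$. Theorem~\ref{converge det} supplies $\det(\mathbf{v}_{i-1},\mathbf{v}_i)=(-1)^{i-1}$, which gives the claimed formula. This part is entirely mechanical.

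For the geometric consequence I would argue as follows. A determinant of the displayed form is positive exactly when the ordered triple of points turns to the left at its middle vertex. Because $a<n$ forces every partial quotient to be positive (with $q_m\ge 2$ by Lemma~\ref{last-par-quot}), the factor $q_iq_{i+1}q_{i+2}$ is always positive, so the sign of the determinant is controlled entirely by $(-1)^{i-1}$. Along the odd path $\gamma(\mathbf{v}_1,\mathbf{v}_3,\ldots,\mathbf{v}_{O(m)})$ each consecutive triple is $(\mathbf{v}_{i-2},\mathbf{v}_i,\mathbf{v}_{i+2})$ with $i$ odd, so $(-1)^{i-1}=+1$, the determinant is positive, and we turn left.

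The one point requiring care is the even path $\gamma(\mathbf{v}_{E(m)},\mathbf{v}_{E(m)-2},\ldots,\mathbf{v}_0)$, which is traversed in \emph{decreasing} index: a consecutive triple there is $(\mathbf{v}_{i+2},\mathbf{v}_i,\mathbf{v}_{i-2})$ with $i$ even, i.e.\ the \emph{reversal} of the triple in the lemma. Reversing three points negates the orientation determinant, and for even $i$ the lemma already yields a negative value (since $(-1)^{i-1}=-1$); the two sign changes cancel, so the traversed triple again turns left. Thus the main obstacle is not the determinant evaluation but keeping this orientation bookkeeping straight under the reversal of the even-indexed path and correctly matching it against the parity of $i$.
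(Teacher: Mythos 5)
Your proof is correct. The paper in fact states Lemma~\ref{angle-at-conv} with no proof at all, so there is no argument to compare against; yours is the evident intended one --- row-reduce to $\det(\mathbf{v}_i-\mathbf{v}_{i-2},\,\mathbf{v}_{i+2}-\mathbf{v}_{i-2})$, expand via the recurrence $\mathbf{v}_j=\mathbf{v}_{j-2}+q_j\mathbf{v}_{j-1}$ to get $q_iq_{i+1}q_{i+2}\det(\mathbf{v}_{i-1},\mathbf{v}_i)$, and invoke Theorem~\ref{converge det}. Your orientation bookkeeping is also right, and it isolates exactly the delicate point: on the even-indexed path the triple is traversed in reversed order, which negates the orientation determinant, and this cancels against the negative sign $(-1)^{i-1}$ for even $i$, so both paths indeed turn left.
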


\subsection{Our main theorem}
For $i=-2, \ldots, m$ let
$$\mathbf{u}_i = \mathbf{v}_i + (1,0).$$
It is worth noting that $\mathbf{u}_{-1} = (1,1)$ and $\mathbf{u}_m = (a+1,n).$
\begin{figure}
\begin{center}
\begin{tikzpicture}[scale=1]
\node at (0,-.5) {(0,0)};
\draw[fill] (0,0) circle [radius=.05];
\node at (0,10.5){$(a,n)$};
\draw[fill] (0,10) circle [radius=.05];
\node at (10,10.5){$(a+1,n)$};
\draw[fill] (10,10) circle [radius=.05];
\node at (10,-.5){(1,0)};
\draw[fill] (10,0) circle [radius=.05];
\node at (5,-.5){(1/2,0)};
\draw[fill] (5,0) circle [radius=.05];
\node at (-1.1,5){$(a/2,n/2)$};
\draw[fill] (0,5) circle [radius=.05];
\node at (11.2,5){$(a+1,n/2)$};
\draw[fill] (10,5) circle [radius=.05];
\node at (5,10.5){$(a+1/2,n)$};
\draw[fill] (5,10) circle [radius=.05];

\node at (2.5,4.1){$pt_7$};
\draw[fill] (2,4) circle (2pt);
\draw [thick] (0,0) -- (2,4);
\node at (4.3,2.3){$pt_8$};
\draw[fill] (4,2) circle (2pt);
\draw [dashed] (2,4) -- (4,2);
\draw [thick] (0,0) -- (4,2);
\draw [thick] (4,2) -- (10,0);
\node at (7,0.6){$pt_1$};
\draw [fill] (7,1) circle (2pt);
\node at (9.5,3){$pt_2$};
\draw [dashed] (7,1) -- (9,3);
\draw[fill] (9,3) circle (2pt);
\draw [thick] (10,0) -- (8,6);
\node at (0.6,7){$pt_6$};
\draw [fill] (1,7) circle (2pt);
\draw [thick] (2,4) -- (0,10);
\node at (3.1,9.3){$pt_5$};
\draw [fill] (3,9) circle (2pt);
\draw [dashed] (1,7) -- (3,9);
\draw [thick] (0,10) -- (3,9);
\node at (5.9, 7.6){$pt_4$};
\draw [fill] (6,8) circle (2pt);
\draw [thick] (3,9) -- (6,8);
\draw [thick] (8,6) -- (10,10);
\draw[fill] (8,6) circle (2pt);
\node at (7.6,5.9){$pt_3$};
\draw[thick] (6,8) -- (10,10);
\draw [dashed] (6,8) -- (8,6);
\draw [dotted] (0,5) -- (10,5);
\draw [dotted] (5,0) -- (5,10);

\node at (4.5,4.5){$R_0$};
\node at (1,4){$T_m$};
\node at (2,2){$R_1$};
\node at (4,1){$T_0$};
\node at (8.5,1.5){$R_2$};
\node at (9.25,5.5){${\mathcal S}(T_m)$};
\node at (8,8){${\mathcal S}(R_1)$};
\node at (6,9){${\mathcal S}(T_0)$};
\node at (1.5,8.5){${\mathcal S}(R_2)$};

\node at (7.75,2.25){$\gamma_1$};
\node at (2.75,2.75){$\gamma_2$};
\node at (6.5,6.75){${\mathcal S}(\gamma_2)$};
\node at (2.25,7.5){${\mathcal S}(\gamma_1)$};

\draw[thick] (0,0) -- (0,10) -- (10,10) -- (10,0) -- (0,0);

\end{tikzpicture}
\end{center}
\caption{A schematic diagram of $P_{a,n}$ with 
$R_0 =P_{a,n}^{(1)}$}
\label{fig:schematic picture}
\end{figure}
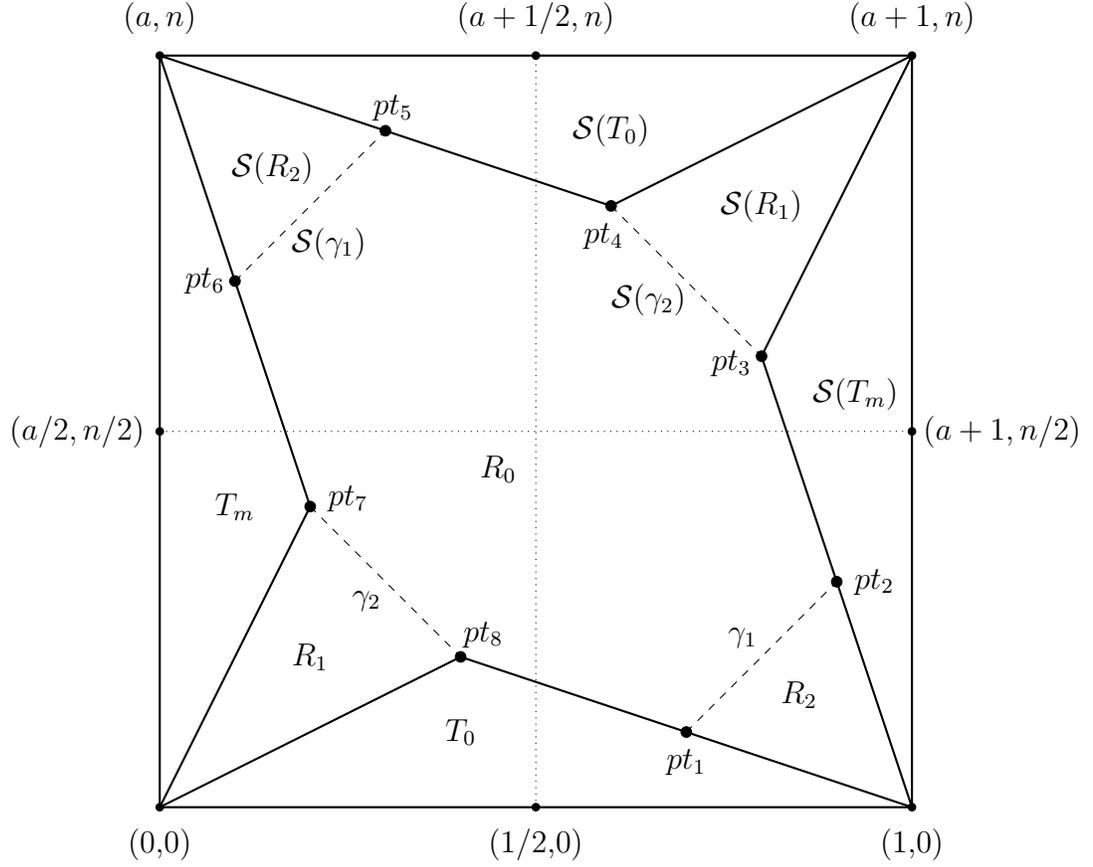

\begin{theorem}\label{main result}
The boundary of $P_{a,n}^{(1)}$ is given by the paths 
$$\gamma(\mathbf{u}_{-1}, \mathbf{u}_1, \mathbf{u}_3, \ldots, \mathbf{u}_{O(m)},\mathbf{u}_m - \mathbf{v}_{E(m)}),$$
$$\gamma(\mathbf{u}-\mathbf{v}_{E(m)},\mathbf{u}_m - \mathbf{v}_{E(m)-2}, \ldots,\mathbf{u}_m - \mathbf{v}_0,\mathbf{u}_m-\mathbf{u}_{-1}),$$ 
$$\gamma(\mathbf{u}_m - \mathbf{u}_1, \mathbf{u}_m - \mathbf{u}_3, \ldots, \mathbf{u}_m-\mathbf{u}_{O(m)},\mathbf{v}_{E(m)}),$$
and $$\gamma(\mathbf{v}_{E(m)},\mathbf{v}_{E(m)-2}, \ldots, \mathbf{v}_0, \mathbf{u}_{-1}).$$
Furthermore, these lattice points are the vertices of $P_{a,n}^{(1)}$. (We will denote the union of these 4 paths by $\Gamma$.)
\end{theorem}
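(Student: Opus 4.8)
My plan is to verify three facts and then let them collide: that $\Gamma$ is a convex lattice polygon, that every one of its listed vertices is an interior lattice point of $P_{a,n}$, and that no interior lattice point of $P_{a,n}$ lies strictly outside $\Gamma$. Granting these, the containment $\mathrm{conv}(\textrm{interior lattice points}) \subseteq (\textrm{region bounded by }\Gamma)$ follows from the third fact, while the reverse containment follows from the second, since all corners of $\Gamma$ are themselves interior points; hence the region bounded by $\Gamma$ equals $P_{a,n}^{(1)}$. Strict convexity at each listed corner then forces every listed point to be an extreme point, so these are precisely the vertices. Throughout I would lean on the point reflection $\mathcal S$ of Lemma~\ref{isometry}: since $\mathcal S$ is an orientation-preserving isometry carrying interior points to interior points, $P_{a,n}^{(1)}$ is $\mathcal S$-invariant, and the four paths are interchanged in pairs (path $1 \leftrightarrow$ path $3$ and path $2 \leftrightarrow$ path $4$). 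This halves the work: it suffices to analyze the two arcs built from $V_1$ and $V_2$ and then apply $\mathcal S$.

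For the second fact I would invoke Lemma~\ref{various-incl}, which places $V_1$ in $P_1$, $V_2$ in $P_2$, $\mathcal S(V_1)$ in $P_3$, and $\mathcal S(V_2)$ in $P_4$, all inside the open parallelogram and hence interior lattice points; the two remaining endpoints $\mathbf u_{-1}=(1,1)=L(S_1)$ and $\mathcal S(\mathbf u_{-1})=(a,n-1)=H(S_a)$ are interior by Lemma~\ref{high-low}. This simultaneously shows that $\Gamma$ decomposes into four arcs, one essentially lying in each $P_j$, glued along the shared endpoints listed in the theorem. For convexity I would use Lemma~\ref{angle-at-conv}: traversing the even-convergent arc $\gamma(\mathbf v_{E(m)}, \ldots, \mathbf v_0)$ (path $4$) and the odd-convergent arc $\gamma(\mathbf v_1, \mathbf v_3, \ldots)$ we always turn left. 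Translating by $(1,0)$ to pass from $\mathbf v_i$ to $\mathbf u_i$ alters no turn, and because $\mathcal S$ preserves orientation the same left-turning holds along $\mathcal S(V_1)$ and $\mathcal S(V_2)$. What remains are the finitely many turns at the joints where consecutive arcs meet (at $\mathbf u_{-1}$, at $\mathcal S(\mathbf v_{E(m)})$, at $(a,n-1)$, and at $\mathbf v_{E(m)}$), which I would settle by direct determinant computations, again using $\mathcal S$ to halve them.

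The crux is the third fact, that $\Gamma$ leaves no interior lattice point outside it. The four arcs are Klein \emph{sails}: the even convergents $V_1$ hug the left edge $y=(n/a)x$ of $P_{a,n}$, the shifted odd convergents $V_2$ hug the right edge $y=(n/a)(x-1)$, and $\mathcal S$ carries these to the two upper arcs. For the left edge I would argue in two parts. First, Theorem~\ref{dist-prop} states that $d_0>d_1>\cdots>d_{m-1}>0$ are successive minima, so no interior lattice point in the relevant $x$-range comes closer to the line $y=(n/a)x$ than the convergents do; this rules out any interior point in the sliver between the arc $\gamma(\mathbf v_0,\ldots,\mathbf v_{E(m)})$ and the left edge. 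Second, Lemma~\ref{no-int-pt} shows each triangle $T_i$ is empty and carries its non-vertex lattice points on the segment $\mathbf v_{i-2}\mathbf v_i$, so the hull boundary between two consecutive listed vertices is exactly that segment, with the intermediate lattice points collinear (hence non-vertices) rather than bulging out. The symmetric statements for $V_2$ and the two $\mathcal S$-images finish the exclusion near all four sides. I expect the genuine difficulty to lie here: assembling these four local, edge-by-edge exclusions, each of which controls only the points near a single side of $P_{a,n}$, into the single global statement that every interior lattice point lies weakly inside the closed curve $\Gamma$, and dovetailing this with the joint convexity checks so that the four arcs truly close up into the convex boundary of $P_{a,n}^{(1)}$ rather than merely a convex chain hugging each side.
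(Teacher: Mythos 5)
Your plan is essentially the paper's proof: the paper likewise reduces the theorem to exactly two facts --- every interior lattice point of $P_{a,n}$ lies in the region $R_0$ bounded by $\Gamma$, and $R_0$ is convex by the left-turning criterion of Lemma~\ref{angle-at-conv} --- and likewise uses the point reflection $\mathcal S$ to halve the work. (You are in fact more careful than the paper in noting that the turns at the four joints are not covered by Lemma~\ref{angle-at-conv} and need separate determinant checks.) Two remarks are in order. First, the opening half of your exclusion argument is not justified as stated: Theorem~\ref{dist-prop} alone does \emph{not} rule out lattice points in the sliver between the even-convergent arc and the edge $y=(n/a)x$. A point strictly inside the triangle $T_i$ (vertices $\mathbf 0$, $\mathbf v_{i-2}$, $\mathbf v_i$) is only forced by that membership to satisfy $d<d_{i-2}$, while for points with $x$-coordinate in $(B_{i-1},B_i]$ the theorem only forbids $d\le d_{i-1}$; the portion of $T_i$ at distance strictly between $d_{i-1}$ and $d_{i-2}$ from the line is in general a nonempty region that distance minimality simply cannot see. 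This does no damage only because your second step already does the entire job --- which is precisely the paper's route: it never uses the distances $d_i$ in this proof, only the emptiness of the triangles $T_i$ from Lemma~\ref{no-int-pt}.

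Second, the ``assembly'' difficulty you flag at the end dissolves once the exclusions are made global rather than edge-by-edge: the even fan $R_1\cup T_m$ (the triangles $T_i$ with $i$ even together with $T_m$, all sharing the apex $(0,0)$), the corner triangle $T_0$, the translated odd fan $R_2=\bigl(\bigcup_{i\ \mathrm{odd}} T_i\bigr)+(1,0)$ (apex $(1,0)$), and their four images under $\mathcal S$ \emph{tile} the closed complement $P_{a,n}\setminus R_0$. Consequently any lattice point of $P_{a,n}$ not in $R_0$ lies in one of finitely many empty triangles, so by Lemma~\ref{no-int-pt} it is either a vertex of $P_{a,n}$ or lies on a segment of $\Gamma$ (one also checks that the four short segments, e.g.\ the one from $(1,0)$ to $\mathbf u_{-1}$, carry no lattice points besides their endpoints). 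This tiling is exactly the content of the paper's schematic Figure~\ref{fig:schematic picture}, and it is the one missing sentence in your plan; with it, your three facts combine exactly as you intend, and the rest of your outline (Lemma~\ref{various-incl} for interiority of the corners, Lemma~\ref{angle-at-conv} plus joint checks for convexity) matches the paper's argument step for step.
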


Two points are worth noting. The lattice point 
$\mathbf{u}_{-1}=(1,1)$ is always a vertex of $P_{a,n}^{(1)}$. Furthermore, when 
$2a >n$, then $\mathbf{v}_0=(1,1)$ and $\mathbf{u}_m-\mathbf{v}_0= \mathbf{u}_m-\mathbf{u}_{-1}$.

\begin{proof}

We begin observing that there are two possible cases: 
$$ (i)\,\, E(m) < O(m); \,\, (ii) \,\, O(m) < E(m).$$
Since the proof for the two cases are identical we restrict ourselves to the case $E(m) < O(m)$. 

The essence of the proof is captured in Figure~\ref{fig:schematic picture}. Before proceeding with our proof we should make some clarifying remarks about this picture.

In the picture we assume that $2a < n$ (that is $\mathbf{v}_0\not=(1,1)$) and $E(m) < O(m)$, and we retain this assumption in our proof.  (The distinction between the cases 
$\mathbf{v}_0 \not= (1,1)$ and $\mathbf{v}_0 =(1,1)$ is trivial.)  We list some aspects of Figure~\ref{fig:schematic picture}. (At this juncture the reader may want to compare Figure~$\ref{fig:schematic picture}$ with Figure~$\ref{fig:P11,29}$.)

\begin{enumerate}

\item A dashed line represents a piecewise linear path. The dotted lines show us the decomposition of $P_{a,n}$ into the 4 congruent parallelograms $P_1,P_2,P_3,P_4$. We remind the reader that $\mathcal S$ represents the unimodular map 
$${\mathcal S}(\mathbf{v}) =-\mathbf{v}+(a+1,n)$$.

\item The polygonal region enclosed by $\Gamma$ is denoted $R_0$. The way we have labelled the points $pt_1, \ldots, pt_8$ in Figure~\ref{fig:schematic picture} illustrates our visualization of $\Gamma$ as starting at $pt_1= \mathbf{u}_{-1}=(1,1)$, travelling counterclockwise and eventually returning to $pt_1$. 

\item $pt_6$ is the \emph{highest} lattice point on the line segment connecting $\mathbf{v}_{E(m)}$ to $(a,n)$ that is distinct from $(a,n)$. Since 
$$ (a,n) = \mathbf{v}_{E(m)} + q_m \mathbf{v}_{O(m)},$$
we conclude that 
$$ pt_6 = \mathbf{v}_{E(m)} + (q_m-1)\mathbf{v}_{O(m)} = \mathbf{u}_m-\mathbf{u}_{O(m)}.$$

\item The $pt_i$'s are the following lattice points: 
$pt_1 = \mathbf{u}_{-1}$, $pt_2= \mathbf{u}_{O(m)}$, $pt_3 = \mathbf{u}_m - \mathbf{v}_{E(m)}$, 
$pt_4 = \mathbf{u}_m - \mathbf{v}_0$, $pt_5= \mathbf{u}_m-\mathbf{u}_{-1}$, $pt_6= \mathbf{u}_m-\mathbf{u}_{O(m)}$, 
$pt_7= \mathbf{v}_{E(m)}$,  $pt_8 = \mathbf{v}_0$. 

\item $\gamma_1$ is the piecewise linear path 
$\gamma(\mathbf{u}_{-1}, \mathbf{u}_1, \ldots, \mathbf{u}_{O(m)})$ and $\gamma_2$ is the piecewise linear path 
$\gamma(\mathbf{v}_0, \mathbf{v}_2, \ldots, \mathbf{v}_{E(m)})$. 

\item We recall the notation from Lemma~\ref{no-int-pt} where 
$T_i$ denoted the triangle with vertices $(0,0)$, $\mathbf{v}_{i-2}$ and $\mathbf{v}_i$. The regions $R_1,R_2$ arise from 
unions of certain $T_i$'s. Specifically,
$$ R_1 = \cup_{i=2, i \textrm{ even }}^{E(m)} T_i 
\textrm{ and } R_2 = \left( \cup_{i =1, i\textrm{ odd }}^{O(m)} T_i \right) + (1,0).$$

\end{enumerate}

Our goal is to prove that 
\begin{equation}
    R_0 = P_{a,n}^{(1)}.
\end{equation}
The proof consists of proving two things:
\begin{enumerate}
    \item All of the lattice points in the interior of $P_{a,n}$ are in $R_0$, that is,
    $$(P_{a,n} - R_0) \cap \Z^2 = \{(0,0),(1,0),(a+1,n),(a,n)\}.$$
    
    \item $R_0$ is convex.
\end{enumerate}

In Lemma~\ref{no-int-pt} we proved that the triangles $T_i$ did not contain any interior lattice points. Consequently, there are no interior lattice points in any of the regions 
$ T_m, R_1, T_0$ and $R_2$.
Furthermore, since $\mathcal S$ is an unimodular map the same remark holds for the regions 
${\mathcal S}(T_m)$, ${\mathcal S}(R_1)$, ${\mathcal S}(T_0)$ and 
${\mathcal S}(R_2)$. Thus to complete the proof that   
$$(P_{a,n} - R_0) \cap \Z^2 = \{(0,0),(1,0),(a+1,n),(a,n)\}$$
we need to show that the only lattice points on the line segments connecting $(a,m)$ to $pt_6= \mathbf{u}_m-\mathbf{u}_{O(m)}$, $(0,0)$ to $pt_7= \mathbf{v}_{E(m)}$, $(0,0)$ to $pt_8 =\mathbf{v}_0$ and $\mathbf{u}_{-1}= (1,1)$ to $(1,0)$ are just the endpoints. This is obvious.

Lemma~\ref{angle-at-conv} tells us that when we traverse the boundary of $R_0$, starting at $pt_1$ and travelling counterclockwise, we are always turning left at the corners. Thus $R_0$ is convex and we conclude that 
$R_0 = P_{a,n}^{(1)}.$
\end{proof}

\begin{lemma}\label{closest points}
\begin{enumerate}
\item If $O(m) < E(m)$, then the lattice point in the interior of $P_{a,n}$ that is closest to the side with endpoints $(0,0)$ and $(a,n)$ is $\mathbf{v}_{E(m)}$, and the lattice point in the interior of $P_{a,n}$ that is closest to the side with endpoints $(1,0)$ and $(a+1,n)$ is $\mathbf{u}_m-\mathbf{v}_{E(m)}$.
\item If $E(m) < O(m)$, then the lattice point in the interior of $P_{a,n}$ that is closest to the side with endpoints $(0,0)$ and $(a,n)$ is $\mathbf{u}_m -\mathbf{u}_{O(m)}$, and the lattice point in the interior of $P_{a,n}$ that is closest to the side with endpoints $(1,0)$ and $(a+1,n)$ is $\mathbf{u}_{O(m)}$.
\end{enumerate}
\end{lemma}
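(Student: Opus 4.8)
The plan is to reduce everything to a single computation about the side with endpoints $(0,0)$ and $(a,n)$, which lies on the line $\ell: nx - ay = 0$, and then to transport the answer to the opposite side by the isometry $\mathcal S$ of Lemma~\ref{isometry}. For a lattice point $\mathbf p = (\beta,\alpha)$ write $c(\mathbf p) = n\beta - a\alpha$, so that the perpendicular distance from $\mathbf p$ to $\ell$ equals $|c(\mathbf p)|/\sqrt{n^2+a^2}$. Since the opposite side of $P_{a,n}$ lies on $nx - ay = n$, the interior of $P_{a,n}$ is exactly the set of points with $0 < nx - ay < n$ and $0 < y < n$; in particular every interior lattice point has $c(\mathbf p) \in \{1,\dots,n-1\}$. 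A short computation with the perpendicular foot $\tfrac{a\beta + n\alpha}{a^2+n^2}(a,n)$ shows that for interior points this foot lies on the segment from $(0,0)$ to $(a,n)$, so the distance from an interior lattice point to the side equals its perpendicular distance to $\ell$, namely $c(\mathbf p)/\sqrt{n^2+a^2}$. Hence minimizing the distance to this side is the same as minimizing the positive integer $c(\mathbf p)$, whose smallest possible value is $c(\mathbf p)=1$.

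First I would establish that there is a \emph{unique} interior lattice point with $c(\mathbf p) = 1$. Because $\gcd(a,n)=1$, the integer solutions of $n\beta - a\alpha = 1$ form a single coset of $\Z(a,n)$, so any two of them differ by a nonzero multiple of $(a,n)$ and therefore differ by at least $n$ in their $y$-coordinate. Since interior points satisfy $0 < y < n$, at most one solution of $c(\mathbf p)=1$ can lie in the interior. Consequently the closest interior lattice point to the side with endpoints $(0,0),(a,n)$ is precisely the unique interior lattice point with $c=1$, once we exhibit one.

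Next I would identify that point, separating the two cases by the parity of $m$: one checks directly that $O(m)<E(m)$ forces $m$ odd with $E(m)=m-1$, while $E(m)<O(m)$ forces $m$ even with $O(m)=m-1$. By Theorem~\ref{converge det}, $c(\mathbf v_{m-1}) = \det(\mathbf v_{m-1},\mathbf v_m) = (-1)^{m-1}$, and the chain $1\le A_0 < A_1 < \dots < A_{m-1} < A_m = n$ guarantees $0 < A_{m-1} < n$. In the case $O(m) < E(m)$ (so $m$ odd) we get $c(\mathbf v_{m-1}) = +1$, whence $\mathbf v_{m-1}=\mathbf v_{E(m)}$ is interior and is the desired point. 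In the case $E(m)<O(m)$ (so $m$ even) we instead get $c(\mathbf v_{m-1}) = -1$, so $\mathbf v_{m-1}$ lies to the left of $\ell$ and is not interior; however its reflection through the midpoint $(a/2,n/2)$ of the side, namely $(a,n)-\mathbf v_{m-1}$, satisfies $c\big((a,n)-\mathbf v_{m-1}\big) = +1$ and has $y$-coordinate $n - A_{m-1}\in(0,n)$, hence is interior. Since $(a,n)-\mathbf v_{m-1} = \mathbf u_m - \mathbf u_{O(m)}$ (using $O(m)=m-1$), this is the desired point.

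Finally I would transfer these answers to the opposite side using the isometry $\mathcal S(\mathbf x) = (a+1,n)-\mathbf x$, which maps $P_{a,n}$ onto itself and carries the side $(0,0),(a,n)$ to the side $(1,0),(a+1,n)$; being an isometry that preserves the interior, it sends the closest interior point of one side to the closest interior point of the other. Computing $\mathcal S(\mathbf v_{E(m)}) = \mathbf u_m - \mathbf v_{E(m)}$ in the odd case and $\mathcal S(\mathbf u_m - \mathbf u_{O(m)}) = \mathbf u_{O(m)}$ in the even case then yields the two remaining assertions. I expect the only delicate points to be the bookkeeping that ties $E(m)$ and $O(m)$ (and the sign of $c(\mathbf v_{m-1})$) to the parity of $m$, and the verification that the perpendicular foot lands on the segment so that distance-to-side genuinely equals perpendicular distance; both are routine but must be handled carefully so that the minimizer is indeed unique.
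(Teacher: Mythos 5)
Your proof is correct. Note that the paper itself states Lemma~\ref{closest points} \emph{without proof}, leaving it as a byproduct of the machinery already developed (Theorem~\ref{dist-prop}, which characterizes the convergents as successive minima of the distance to the line $y=(n/a)x$, together with the symmetry $\mathcal S$ of Lemma~\ref{isometry}), so there is no official argument to compare against line by line. Your route is genuinely different and more elementary: instead of invoking the successive-minima property, you observe that for an interior lattice point $(\beta,\alpha)$ the distance to the side through $(0,0),(a,n)$ equals $(n\beta-a\alpha)/\sqrt{n^2+a^2}$ with $n\beta-a\alpha\in\{1,\dots,n-1\}$, that the value $1$ is attained by at most one interior point (since solutions of $n\beta-a\alpha=1$ form a single coset of $\Z(a,n)$ and so differ by at least $n$ in their $y$-coordinates), and that it is attained by $\mathbf v_{m-1}$ or by $(a,n)-\mathbf v_{m-1}$ according to the sign of $c(\mathbf v_{m-1})=\det(\mathbf v_{m-1},\mathbf v_m)=(-1)^{m-1}$ from Theorem~\ref{converge det}; the reflection $\mathcal S$ then transfers the answer to the opposite side. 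This decouples the lemma from Theorem~\ref{dist-prop} entirely and yields slightly more than what is claimed: the minimizer is unique and its distance is exactly $1/\sqrt{n^2+a^2}$. The delicate points you flagged do all check out: the parity bookkeeping is right ($O(m)<E(m)$ iff $m$ is odd, with $E(m)=m-1$; $E(m)<O(m)$ iff $m$ is even, with $O(m)=m-1$, covering even the degenerate case $m=0$ via $\mathbf v_{-1}=(0,1)$); interiority follows from $0<A_{m-1}<n$; and the foot-of-perpendicular verification holds because an interior lattice point has $1\le\beta\le a$ and $1\le\alpha\le n-1$, so $0<a\beta+n\alpha\le a^2+n(n-1)<a^2+n^2$, which puts the foot strictly inside the segment and makes distance-to-side equal to distance-to-line.
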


\subsection{Some numeric and geometric consequences} 

In the next lemma we describe some consequences of Theorem~\ref{main result}. For us, equation~\eqref{eq:interpretation} is the punchline of this article.

\begin{lemma}\label{vertex count and area}
    Let $[q_0,q_1,\ldots, q_m]$ denote the continued fraction of $n/a$. We have the following:

    \begin{equation}\label{eq:ver-count}
\#\textrm{vertices}(P_{a,n}^{(1)}) =
\begin{cases}
 2(m+1), & q_0 >1, \\ 2m, & q_0 =1.
 \end{cases}
    \end{equation}
    
\noindent The number of lattice points on the boundary of 
$P_{a,n}^{(1)}$ is $$\sum_{i=0}^m 2q_i -4,$$
and consequently, via Pick's theorem, we get that 
\begin{equation}\label{eq:area of hull}
\textrm{area}(P_{a,n}^{(1)})= n-\sum_{i=0}^m q_i. 
\end{equation}
Thus,
\begin{equation}\label{eq:interpretation}
    \sum_{i=0}^m q_i = \textrm{area}(P_{a,n}) - \textrm{area}(P_{a,n}^{(1)}).
\end{equation}

\end{lemma}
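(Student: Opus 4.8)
The plan is to extract the combinatorial data directly from the explicit description of $\partial P_{a,n}^{(1)}$ in Theorem~\ref{main result}, counting first the vertices and then \emph{all} boundary lattice points, and finally converting that count into an area via Pick's theorem. Throughout I would assume $E(m) < O(m)$ (equivalently $m$ even, $O(m) = m-1$, $E(m) = m-2$), the other case being symmetric, and treat $q_0 > 1$ as the main case.

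\textbf{Vertices.} By Lemma~\ref{various-incl} the vertices lying below $y = n/2$ are exactly $\mathbf{u}_{-1} = (1,1)$, the even convergents $\mathbf{v}_i$ with $0 \le i \le E(m)$, and the shifted odd convergents $\mathbf{u}_i$ with $1 \le i \le O(m)$; counting gives $1 + \bigl(\tfrac{E(m)}{2}+1\bigr) + \tfrac{O(m)+1}{2} = m+1$. The remaining vertices are their $\mathcal S$-images above $y=n/2$ (Lemma~\ref{isometry}), so $P_{a,n}^{(1)}$ has $2(m+1)$ vertices. When $q_0 = 1$ we have $\mathbf{v}_0 = (1,1) = \mathbf{u}_{-1}$, forcing the coincidences $\mathbf{u}_m - \mathbf{v}_0 = \mathbf{u}_m - \mathbf{u}_{-1}$ and $\mathbf{v}_0 = \mathbf{u}_{-1}$; two pairs of listed vertices merge and the count drops to $2m$, giving \eqref{eq:ver-count}.

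\textbf{Boundary lattice points.} The crucial input is Lemma~\ref{no-int-pt}: every edge $\mathbf{v}_{i-2} \to \mathbf{v}_i$, and hence every translate or $\mathcal S$-image of it, carries exactly $q_i - 1$ non-vertex lattice points. Traversing $\Gamma$, the paths $\gamma_1, \mathcal S(\gamma_1)$ contribute $2\sum_{i \text{ odd}} (q_i-1)$ over $1 \le i \le O(m)$, and $\gamma_2, \mathcal S(\gamma_2)$ contribute $2\sum_{i \text{ even}} (q_i-1)$ over $2 \le i \le E(m)$; the four connecting edges split into two of the form $(q_m-1)\mathbf{v}_{O(m)}$ (each with $q_m - 2 \ge 0$ interior points, using $q_m \ge 2$ from Lemma~\ref{last-par-quot}) and two of the form $(q_0-1)(0,1)$ (each with $q_0 - 2$ interior points). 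Since the odd indices $1,\dots,m-1$ together with the even indices $2,\dots,m-2$ exhaust $\{1,\dots,m-1\}$, the non-vertex total is $2\bigl(\sum_{i=0}^m q_i - m - 3\bigr)$; adding the $2(m+1)$ vertices telescopes to $\sum_{i=0}^m 2q_i - 4$. In the $q_0 = 1$ case the two degenerate edges contribute $0$ instead of $q_0 - 2$, but the simultaneous loss of two vertices yields the same final count.

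\textbf{Area and the main obstacle.} Finally I would apply Pick's theorem to the lattice polygon $P_{a,n}^{(1)}$. Since $P_{a,n}^{(1)}$ is the convex hull of the interior lattice points of $P_{a,n}$ and $\textrm{interior}(P_{a,n})$ is convex, the lattice points of $P_{a,n}^{(1)}$ are precisely the $n-1$ interior lattice points of $P_{a,n}$ found in the proof of Theorem~\ref{reduction-res}; thus $I + B = n-1$ with $B = \sum_{i=0}^m 2q_i - 4$. Substituting $I = n - 1 - B$ into Pick's formula gives $\textrm{area}(P_{a,n}^{(1)}) = I + \tfrac{B}{2} - 1 = n - \tfrac{B}{2} - 2 = n - \sum_{i=0}^m q_i$, which is \eqref{eq:area of hull}, and \eqref{eq:interpretation} follows since $\textrm{area}(P_{a,n}) = n$. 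I expect the boundary count to be the only real obstacle: one must avoid double-counting shared vertices, correctly assign the four connecting edges, and dispose of the degenerate $q_0 = 1$ case, and it is precisely the index manipulation collapsing the odd and even sums into $\sum_{i=0}^m q_i$ where an off-by-one error is most likely — so I would sanity-check the totals on $P_{11,29}$ first.
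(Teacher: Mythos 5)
Your proposal is correct and follows essentially the same route as the paper: both read off the vertex list from Theorem~\ref{main result} (noting the coincidence $\mathbf{v}_0=\mathbf{u}_{-1}$ when $q_0=1$), count boundary lattice points via the edge counts of Lemma~\ref{no-int-pt}, and then apply Pick's theorem together with the fact that $P_{a,n}^{(1)}$ contains exactly the $n-1$ interior lattice points of $P_{a,n}$. The only difference is bookkeeping: the paper counts the lattice points on the half-path $\alpha$ from $pt_6$ to $pt_2$ inclusively and doubles using the symmetry $\mathcal{S}$, whereas you sum the $q_i-1$ non-vertex points over all edges and add the vertices --- both yield $\sum_{i=0}^m 2q_i-4$.
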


\begin{proof}
    In Theorem~\ref{main result} we listed the vertices of $P_{a,n}^{(1)}$. We obtain~\eqref{eq:ver-count} by observing that all of the lattice points in this list are distinct except when $q_0=1$. For the case $q_0=1$ we have  that $\mathbf{u}_{-1}=\mathbf{v}_0.$ 

    The only detail in this lemma that needs a little care is showing that the number of lattice points on the boundary of $P_{a,n}^{(1)}$ is $\sum_{i=0}^m 2q_i -4.$ 
    
    We consider the case when $q_0 >1$, and examine  Figure~\ref{fig:schematic picture}. We repeatedly invoke the fact that the number of lattice points on the line segment connecting $\mathbf{v}_{i-2}$ to $\mathbf{v}_i$ equals $q_i+1$ (see Lemma~\ref{no-int-pt}). By doing so we get the following.
    \begin{enumerate}
\item The number of lattice points on the line segment connecting $pt_7 = \mathbf{v}_m$ to $pt_6 =\mathbf{u}_m-\mathbf{u}_{O(m)}$ is $q_m+1$.
\item The number of lattice points on the line segment connecting $pt_8 = \mathbf{v}_0$ to $pt_1 =\mathbf{u}_{-1}$ is $q_0+1$.
    \item $$ \#(\gamma_2 \cap \Z^2) = \sum_{i=2, i \textrm{ even}}^{E(m)} q_i +1;$$
    \item $$ \#(\gamma_1 \cap \Z^2) = \sum_{i=1, i \textrm{ odd}}^{O(m)} q_i +1;$$
    \end{enumerate}
    
    We now consider the part of the boundary of $P_{a,n}^{(1)}$ that starts at $pt_6$ and ends at $pt_2$. Let us call this piece-wise linear path $\alpha$. From the above numerical information we get that the number of lattice points on $\alpha$ (and also on ${\mathcal S}(a)$)
is $\sum_{i=0}^m q_i -1$. Since 
$$\textrm{boundary}(P_{a,n}^{(1)}) = \alpha \cup {\mathcal S}(\alpha),$$ 
with $\alpha\cap {\mathcal S}(\alpha) = \{ pt_6,pt_2\}$, we conclude the number of lattice points on the boundary of $P_{a,n}^{(1)}$ equals $\sum_{i=0}^m 2q_i -4.$ The proof for the case $q_0=1$ is nearly identical.
\end{proof}

\noindent {\bf Remark.} When $a$ is small in comparison to $n$, then the dominant term in the sum $\sum_{i=0}^m q_i$ is $q_0=\lfloor n/a \rfloor$. Thus, when $a$ is small a good upper bound for area($P_{a,n}^{(1)}$) is $n-\lfloor n/a \rfloor.$

The length of the continued fraction of a rational number is simply  the number of steps taken by the Euclidean algorithm to find the greatest common divisor of the numerator and denominator of the rational number. Thus, standard results on the computational complexity of the Euclidean algorithm translate to results on the number of vertices of $P_{a,n}^{(1)}$.

Let us recall the relevant results. Our reference is  Volume 2, Seminumerical Algorithms, of Knuth's masterpiece {\it The Art of Computer Programming}~\cite{Kn}. (See Chapter 4, Section 5, Subsection 3.0.) We start by stating the 18th century theorem that describes the worst case for the running time of the Euclidean algorithm, or in other words,  an upper bound for the number of division steps. This occurs when the inputs are consecutive Fibonacci numbers.

\begin{theorem}\label{worst case}
    For $n\geq 1$, let $a$ and $b$ be integers with $a >b>0$ such that Euclid's algorithm 
    applied to $a$ and $b$ requires exactly $n$ division steps, and such that $a$ is as small as possible satisfying these conditions. Then $ a= F_{n+2}$ and $b=F_{n+1}$, where 
    $F_k$ is the $k$-th Fibonacci number.
\end{theorem}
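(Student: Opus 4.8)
The plan is to prove a sharper two-sided statement by induction on the number of division steps $n$: if Euclid's algorithm applied to integers $a > b > 0$ takes exactly $n$ steps, then $a \geq F_{n+2}$ and $b \geq F_{n+1}$, and the pair $(F_{n+2}, F_{n+1})$ attains both bounds. The theorem follows immediately, since among all pairs requiring $n$ steps the smallest possible value of $a$ is then $F_{n+2}$, which in turn forces $b = F_{n+1}$.

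First I would dispose of the base case $n=1$. A single division step means $a = q_1 b$ with zero remainder; since $a > b > 0$ we have $q_1 \geq 2$. This is the same phenomenon as $q_m \geq 2$ in Lemma~\ref{last-par-quot}: the final quotient of the Euclidean algorithm can never equal $1$. Hence $a \geq 2b \geq 2 = F_3$ and $b \geq 1 = F_2$, as required. For the inductive step I would write the first division as $a = q_1 b + r_1$ with $0 < r_1 < b$, the remainder being nonzero because $n \geq 2$. The remaining computation is exactly Euclid's algorithm applied to $(b, r_1)$, which takes $n-1$ steps, so the inductive hypothesis yields $b \geq F_{n+1}$ and $r_1 \geq F_n$. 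Since $q_1 \geq 1$,
\[
a = q_1 b + r_1 \geq b + r_1 \geq F_{n+1} + F_n = F_{n+2}.
\]
Tracing the equality case, $a = F_{n+2}$ forces $q_1 = 1$, $b = F_{n+1}$ and $r_1 = F_n$ simultaneously, which both establishes the lower bounds and pins down the minimizer.

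Finally I would verify sharpness by confirming that $(F_{n+2}, F_{n+1})$ genuinely requires $n$ steps: the Fibonacci recurrence $F_{k+2} = 1 \cdot F_{k+1} + F_k$ shows every quotient equals $1$ until the terminal division $F_3 = 2 \cdot F_2$, giving exactly $n$ divisions. I expect no serious obstacle in this argument; the only points needing care are fixing a precise convention for counting division steps so that the base case lines up, and observing that no coprimality hypothesis is required, since the bound $a \geq F_{n+2}$ holds for \emph{every} pair while the minimizing pair $(F_{n+2}, F_{n+1})$ is automatically coprime. An alternative and equally short route, more in keeping with the continued-fraction theme of this paper, would phrase everything through the expansion $a/b = [q_1, \ldots, q_n]$ with all $q_i \geq 1$ and $q_n \geq 2$: minimizing $a$ amounts to taking every partial quotient as small as possible, namely $[1, 1, \ldots, 1, 2] = [1, 1, \ldots, 1, 1, 1]$, whose convergent numerators are precisely the Fibonacci numbers.
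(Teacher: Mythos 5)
Your proof is correct, but there is nothing in the paper to compare it against: the paper does not prove this statement at all, quoting it instead from Knuth (Vol.~2, Section 4.5.3) as the classical worst-case theorem for the Euclidean algorithm (Lam\'e's theorem). Your induction is the standard self-contained argument and it checks out in every detail. The base case correctly uses the fact that a terminal quotient of Euclid's algorithm is at least $2$, which is indeed the same observation as Lemma~\ref{last-par-quot} of the paper; the inductive step $a = q_1 b + r_1 \geq b + r_1 \geq F_{n+1} + F_n = F_{n+2}$ is sound because the tail of the computation is exactly Euclid's algorithm on the pair $(b, r_1)$ with $b > r_1 > 0$; and your equality analysis genuinely pins down $b = F_{n+1}$ once $a = F_{n+2}$, which is needed because the theorem asserts the value of $b$ as well as the value of $a$ --- a point a hastier argument could overlook. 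The sharpness check is also right: starting from $(F_{n+2}, F_{n+1})$ the algorithm passes through the pairs $(F_{k+2}, F_{k+1})$ for $k = n, n-1, \ldots, 1$, each contributing one division, the last being $F_3 = 2 \cdot F_2$. Your closing reformulation via continued fractions --- minimizing the numerator of $[q_1,\ldots,q_n]$ subject to $q_i \geq 1$ and $q_n \geq 2$ forces $[1,1,\ldots,1,2]$, whose convergent numerators are Fibonacci numbers --- is equivalent and arguably closer in spirit to the rest of the paper, where the length of the continued fraction of $n/a$ is precisely what controls the vertex count of $P_{a,n}^{(1)}$. Either version of your argument would serve as a self-contained replacement for the paper's citation.
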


An immediate consequence of ~\eqref{eq:ver-count} and Theorem~\ref{worst case} is that
\begin{equation}
    \# \textrm{ vertices of } P_{F_{n+1},F_{n+2}}^{(1)} = 2n;
\end{equation}
thus demonstrating that (unlike lattice rectangles) there is no universal bound for the number of vertices of $P_{a,n}^{(1)}$. 

The Fibonacci numbers provide another neat result about $P_{a,n}^{(1)}.$

\begin{cor}
$P_{a,n}^{(1)}$ is a clean lattice polygon if and only if
 $n=F_m$, and $a=F_{m-1}$ or $a=F_{m-2}$.
\end{cor}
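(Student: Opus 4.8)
The plan is to reduce the statement to a counting identity and then a short Diophantine analysis of the partial quotients. Since the word \emph{clean} means that the only lattice points on the boundary of $P_{a,n}^{(1)}$ are its vertices, and since a convex lattice polygon is clean exactly when its number of boundary lattice points equals its number of vertices (boundary lattice points always outnumber the vertices, with equality precisely when no edge carries a lattice point in its relative interior), I would first combine the two formulas from Lemma~\ref{vertex count and area}: the boundary count $\sum_{i=0}^m 2q_i - 4$ and the vertex count, which is $2(m+1)$ when $q_0>1$ and $2m$ when $q_0=1$. Setting these equal and dividing by $2$ gives the single constraint $\sum_{i=0}^m q_i = m+3$ in the case $q_0>1$, and $\sum_{i=0}^m q_i = m+2$ in the case $q_0=1$.

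Next I would solve each equation using only the elementary bounds $q_i\ge 1$ and $q_m\ge 2$ (Lemma~\ref{last-par-quot}). Writing $q_i = 1+e_i$ with $e_i\ge 0$, the total excess $\sum_{i=0}^m e_i$ equals $2$ in the first case and $1$ in the second. In the case $q_0>1$ we must have $e_0\ge 1$ and $e_m\ge 1$, which already accounts for the full excess of $2$, so the only solution is $q_0=q_m=2$ and $q_1=\cdots=q_{m-1}=1$, i.e.\ the continued fraction $[2,1,1,\ldots,1,2]$. In the case $q_0=1$ the only freedom is $e_m=1$, forcing $q_m=2$ and all other $q_i=1$, i.e.\ $[1,1,\ldots,1,2]$. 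Every step here is an equivalence, so this simultaneously handles both directions of the ``if and only if.''

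Finally I would convert these two canonical continued fractions into Fibonacci ratios, using the standard identities $[\underbrace{1,\ldots,1}_{k}]=F_{k+1}/F_k$ and the terminal rewrite $[\ldots,2]=[\ldots,1,1]$ (legitimate because a rational in lowest terms has a unique continued fraction once we insist $q_m\ge 2$). The first case becomes $[2,\underbrace{1,\ldots,1}_{m+1}]=F_{m+4}/F_{m+2}$ and the second becomes $[\underbrace{1,\ldots,1}_{m+2}]=F_{m+3}/F_{m+2}$; since consecutive Fibonacci numbers are coprime, reading off numerator and denominator gives $n=F_{m+4},\,a=F_{m+2}$ (the case $a=F_{\mu-2}$ with $\mu=m+4$) and $n=F_{m+3},\,a=F_{m+2}$ (the case $a=F_{\mu-1}$ with $\mu=m+3$), which is exactly the claimed characterization.

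I expect the only delicate points to be bookkeeping rather than conceptual: matching the paper's index $m$ against the Fibonacci index $\mu$ in the statement, using the terminal convention $q_m\ge 2$ consistently so that the continued fraction of $n/a$ is genuinely unique, and disposing of the degenerate small cases (such as $a=1$, where $P_{a,n}^{(1)}$ collapses to a segment or a single point). The substantive content, by contrast, is just the excess-counting above, which pins down the partial quotients completely.
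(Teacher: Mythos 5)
Your proposal is correct and takes essentially the same route as the paper: the paper's own proof simply asserts that $P_{a,n}^{(1)}$ is clean if and only if the continued fraction of $n/a$ is $[1,1,\ldots,1,2]$ or $[2,1,\ldots,1,2]$ and then reads off the Fibonacci ratios, and your excess-counting argument from the boundary and vertex formulas of Lemma~\ref{vertex count and area} is exactly the justification the paper leaves implicit, with the Fibonacci conversion identical. The one quibble is the degenerate case $m=0$, where $e_0$ and $e_m$ are the same variable so the count also admits $[3]$, i.e.\ $(a,n)=(1,3)$; this still gives $n=F_4$, $a=F_2$, and you flagged such degenerate small cases yourself.
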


\begin{proof}
    $P_{a,n}^{(1)}$ is clean if and only if the continued fraction of $n/a$ is $[1,1,1,\ldots,1,2]$ or 
    $[2,1,1,\ldots, 1,2]$. If it equals $[1,1,1,\ldots,1,2]$,
    then $n=F_m,a=F_{m-1}$; if it equals $[2,1,1,\ldots,1,2]$,
    then $n=F_m,a=F_{m-2}$.
\end{proof}

A consequence of Theorem~\ref{worst case} is that the number of division steps needed to find $\gcd(a,n)$ using the Euclidean algorithm is at most $2.078\log(n) + .6723$, and consequently  
\begin{equation}
    \# \textrm{ vertices of } P_{a,n}^{(1)} \leq 4.156 \log(n)+ 
    3.3446.
\end{equation}

\subsubsection{A couple of averages}
We now discuss the average value for the number of vertices of 
$P_{a,n}^{(1)}$ for fixed $n$. Let $T(m,n)$ denote the number of division steps needed when the inputs to the Euclidean algorithm are $m$ and $n$; and let 
$$\tau_n= \frac{1}{\varphi(n)} \sum_{0\leq m <n, \gcd(m,n)=1} T(m,n).$$
It has been shown that 
\begin{equation}
    \tau_n \approx\frac{12\log (2)}{\pi^2} \log(n) + 1.467.
\end{equation}
(See~\cite[page 372, equation 60]{Kn}). From this it follows that 
the average number of vertices of $P_{a,n}^{(1)}$ is approximately 
$1.685 \log(n).$

Furthermore, for a fixed $n$, 
the average value of $\textrm{area}(P_{a,n}^{(1)})$ is approximately 
$n- (6/\pi^2)\log^2 n.$ We obtain this by applying a result of  
Popov~\cite{P}. Let us explain.

For $a,n \in \Z^+$, with $\gcd(a,n)=1$, let $S(a,n)$ denote the sum of the partial quotients of the continued fraction of 
$a/n$. For $p$ prime, Popov proved that
\begin{equation}
\sum_{1 \leq a < p} S(a,p) = \frac{6}{\pi^2} p \log^2(p) + O(p\log(p)).
\end{equation}
Furthermore, he stated, without proof, that for large $n$
\begin{equation}
    \sum_{1 \leq a <n} S(a,n) \approx \frac{6}{\pi^2} \varphi(n) \log^2(n) .
\end{equation}

If the continued fraction of $n/a$, with $n >a>0$, is
$[q_0,q_1,\ldots, q_m]$, then the continued of 
$a/n$ is $[0,q_0,q_1,\ldots, q_m]$. Consequently,
$S(n,a)=S(a,n).$
Equation~\eqref{eq:area of hull} is 
$$\textrm{area}(P_{a,n}^{(1)}) = n- S(a,n).$$
Consequently, we can invoke Popov's result to conclude that on average $\textrm{area}(P_{a,n}^{(1)})$ is approximately 
$n-(6/\pi^2) \log^2 n$.\footnote{As an aside, we mention that Viktor Nikolaevich Popov was an eminent theoretical physicist highly regarded for his work on the quantization of non-abelian gauge fields.}

\subsubsection{Speculative connections to modular forms?} We now indulge in a short interlude of fanciful speculation! From the analytic number theory point of view, the average number of vertices of $P_{a,n}^{(1)}$, with fixed $n$, is reciprocal to the density of prime numbers less than $n$. We wonder if this can be viewed as a bridge that connects the levels of interior hulls of lattice points and the levels of modular forms (or more generally automorphic forms) and the modular form spaces dimensions for each level, obtained from Riemann-Roch, where the level of the interior hull corresponds to the level of congruence subgroup and the lattice points count correspond to the level $N$ modular form space's dimension. 

To elaborate, our ``reasoning'' is as follows.  Heuristically speaking, the number of lattice points in the interior of a polygon has the same magnitude as the area of the polygon. In the case of modular forms, the volume is linked to the spectrum of certain invariant operators  via trace formulae. For example, consider a family of Hecke operators acting on a vector space of modular forms of a given weight and a given level of congruence subgroup of the modular group. The trace of the identity operator gives the dimension of that space. That is, it gives the dimension of the space of modular forms with given weight and level. This quantity is traditionally calculated via Riemann–Roch, or trace formulae such as Selberg’s or Arthur’s. In particular, a complete set of dimension formulae have been computed in~\cite[Proposition 6.1]{St}. To us, these dimension formulae have a similar flavour to our main counting formula ~\eqref{eq:interpretation}. 

\subsection{Exercise for the curious reader.} An arbitrary lattice parallelogram, $P$, can be tiled by copies of a clean parallelogram, $C$. It is left to the curious reader to work out how we can determine $P^{(1)}$ from knowing $C^{(1)}$.

\subsection{Some future directions.} We have two future lines of inquiry. 
\begin{enumerate}
    \item The first is to investigate the interior hulls $P^{(2)}, P^{(3)}, \ldots$ for clean lattice parallelograms to determine if there are other interesting correspondences. All of our efforts in this paper have been confined to the first interior hull $P^{(1)}$.
\item The second is to investigate the interior hulls of clean lattice parallelepipeds. In the next subsection we describe a particular case for such parallelepipeds.
\end{enumerate}

\subsection{An interesting degenerate case in $\R^3$.} White~\cite{Wh} proved that all of the non-vertex lattice points in  a clean parallelepiped that arises from an empty tetrahedra are \emph{coplanar}. Consequently the interior hull of such a parallelepiped is always a lattice polygon and \emph{never} a lattice polytope. It follows that our central result applies \emph{mutatis mutandis} in this special case. Let us elaborate.

Let $\P$ be a clean lattice parallelepiped. Without loss of generality we can assume that one of the vertices is at the origin and consequently we can write $\P$ as 
$$ \P= \{ t_1 \mathbf{v}_1+ t_2\mathbf{v}_2 + t_3 \mathbf{v}_3 \, ; \; 
\mathbf{v}_1, \mathbf{v}_2, \mathbf{v}_3 \in \Z^3, 0 \leq t_1,t_2,t_3 \leq 1 \}.$$

Since $\P$ is clean we can find an unimodular map that sends 
$$ \mathbf{v}_1 \mapsto (1,0,0), \mathbf{v}_2 \mapsto (0,1,0), \mathbf{v}_3 \mapsto (a,b,c),$$
with $a,b,c \in \Z, c = vol(\P), 1 \leq a,b <c, \gcd(a,c)=\gcd(b,c)=1.$ Thus we can identify $\P$ with the parallelepiped
$$ \P_{a,b,c}= \{ t_1 (1,0,0)+ t_2(0,1,0) + t_3(a,b,c) \, ; \; 
0 \leq t_1,t_2,t_3 \leq 1 \}. $$
Let ${\mathcal T}_{a,b,c}$ denote the tetrahedra, 
$$ \{ t_1 (1,0,0) + t_2 (0,1,0) + t_3 (a,b,c) \, ; \; 0 \leq t_1,t_2,t_3 \leq 1, 
 0\leq t_1+t_2+t_3 \leq 1\}.$$
We now come to the punchline. If ${\mathcal T}_{a,b,c}$ is empty (\emph{i.e.} does not contain any lattice points other than its vertices), then $\P_{a,b,c}^{(1)}$ can be identified with 
 $P_{x,c}^{(1)}$ for some $x \in \Z$ with $1 \leq x < c, \gcd(x,c)=1.$ The key is to invoke White's theorem~\cite{Wh}.  White showed that if ${\mathcal T}_{a,b,c}$ is empty then either $a=1$ or $b=1$ or $d=1$, where $d=(1-a-b) \mod c.$ From this it follows that if ${\mathcal T}_{a,b,c}$ is empty then $\P_{a,b,c}$ is unimodularly equivalent to $\P_{1,x,c}$ for some $x<c$ with $\gcd(x,c)=1$. All of the interior lattice points in $\P_{1,x,c}$ lie in the parallelogram with vertices $(1,0,0),(1,1,0), (1,x,c)$ and $(1,x+1,c)$. Clearly, we can identify this parallelogram with the $P_{x,c} \subset \R^2,$ and consequently $\P_{a,b,c}^{(1)}$ can be identified with $P_{x,c}^{(1)} \subseteq \R^2$.

The interested reader can consult the book~\cite[Chapter 15]{K} as well as the expository article~\cite{KR} for further information on White's theorem.

\begin{figure}
    \centering
    \includegraphics[width=1\linewidth]{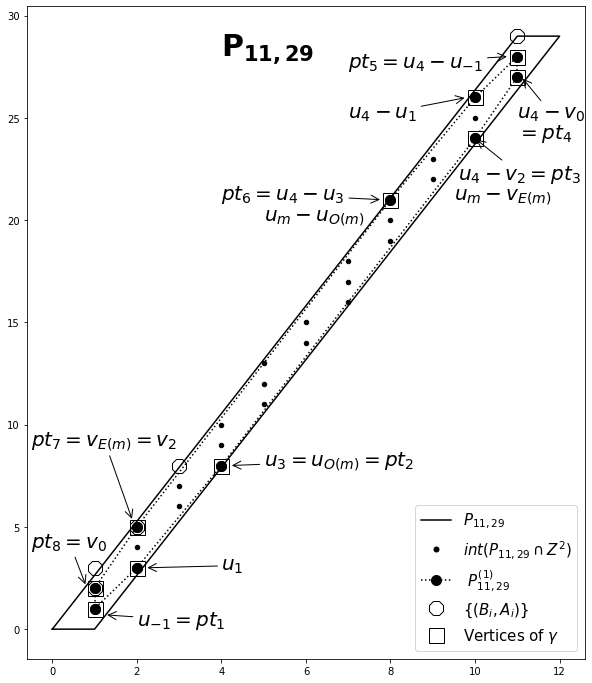}
    \caption{$P_{11,29}$ and $P_{11,29}^{(1)}$}
    \label{fig:P11,29}
\end{figure}

\section{Appendix: Graphing Routines}

We give the Python code to draw $P_{a,n}$, $P_{a,n}^{(1)}$, the lattice points in $P_{a,n}$, the points corresponding to the convergents of $n/a$ and the points defining the boundary of $P_{a,n}^{(1)}$.

The code has been used in drawing Figure~$\ref{fig:P11,29}$. In that figure we have the following: $P_{11,29}$, $\textrm{int}(P_{11,29} \cap \Z^2)$, 
$ P_{11,29}^{(1)} $, the points corresponding to the convergents of $29/11$, and the points defining the paths arising in Theorem~\ref{main result}.

\begin{lstlisting}[language=Python]
import numpy as np
import matplotlib.pyplot as plt
import math
import matplotlib.lines as mlines
from scipy.spatial import ConvexHull

# Draw the parallelogram P 
# with vertices (0,0), (a,n), (a+1, n), (1,0)
# where a, n are integers and 0<a<n 
def plotparll(a,n):
    parl = np.array(
        [(0,0), (a,n), (a+1, n), (1,0), (0,0)])
    x, y = parl.T
    plt.plot(x, y, 'k')

# List the interior points of parallelogram P
def interior (a,n):
    list2=[]
    for i in range(1, a+1):
        for j in range(1, n):
            if (j>=(n/a)*(i-1)) \
            and (j<= (n/a)*i):
                list2=list2+[(i,j)]
    return(list2)


# Draw the interior points of the parallelogram P
def drawintparl(a, n):
    A = np.array(interior(a, n))
    u, v = A.T
    plt.scatter(u, v, c='k', s=20)
    
# Calculate the convergents for n/a 
def cf_conv(a,n):
    vm2 = np.array([1,0])
    vm1 = np.array([0,1])
    q = 1; i = 0
    v = np.add(vm2, q*vm1)
    w = np.add(vm2, (q+1)*vm1)
    pts = []
    while (v[1]*a != v[0]*n):
        if (i%2 == 0):
            while((v[1]*a < v[0]*n) 
                  and (w[1]*a < w[0]*n)):
                q=q+1
                v= np.add(vm2, q*vm1)
                w= np.add(vm2, (q+1)*vm1)
        elif (i%2 == 1):
            while((v[1]*a > v[0]*n) 
                  and (w[1]*a > w[0]*n)):
                q=q+1
                v= np.add(vm2, q*vm1)
                w= np.add(vm2, (q+1)*vm1)
        if ((v[1]*a == v[0]*n) 
            or (w[1]*a == w[0]*n)):
            pts = pts + [w]
            ptsarr= np.asarray(pts)
            return(ptsarr)
            break
        i = i+1
        vm2 = vm1; vm1 = v
        pts = pts + [v]
        q = 1
        v = np.add(vm2, q*vm1) 
        w = np.add(vm2, (q+1)*vm1)            

# Plot the points corresponding to the 
# convergents for n/a
def cf_conv_plot(a,n):
    u6, v6 = cf_conv(a,n).T
    plt.plot(
        u6, v6, '8',mec='k', 
        c='none', ms=15)
        
# Draw the convex hull of the 
# interior lattice points of P
def drawcvhull(a, n):
    if a == 1:
        plt.plot((1,1),(1,n-1),'o:k',ms=10)
    elif a == n-1:
        plt.plot((1,n-1),(1,n-1),'o:k',ms=10)
    else:
        points = np.array(interior(a, n))
        hull = ConvexHull(points)
        for simplex in hull.simplices:
            plt.plot(
                points[simplex, 0], 
                points[simplex, 1],'o:k',ms=10)

# Calculate the points in the curve gamma 
# computed from the convergents of n/a
# that gives the convex hull of the 
# interior points of P(a,n)
def gammapts(a,n):
    # Calculate the convergents of n/a 
    #i.e. v(0), v(1), ..., v(m).
    V = cf_conv(a,n)
    
    # Calculate m
    m = len(V)-1
    
    # Calculate o_m = smallest odd integer < m
    # and e_m = smallest even integer < m
    if m%2 == 0:
        o_m = m-1
        e_m = m-2
    else:
        o_m = m-2
        e_m = m-1   
        
    # Calculate u(i) = v(i) + (1,0) 
    # for i = 0 to m. Note u(-1)= (1,1)
    # List u(-1), u(1),..., u(o_m) (List 1). 
    # Initialise List1=[(1,1)]
    List1 = [np.array([1, 1])]  
    for i_o in range(0,o_m+1):
        if i_o%2 == 1:
            List1.append(V[i_o]+(1,0))
            
    # Calculate u(m)-v(e_m), u(m)-v(e_m-2),.., 
    # u(m)-v(0) (List 2)
    List2 = []
    for i_e in range(0,e_m+1):
        if i_e%2 == 0:
            List2.append(V[m]+(1,0)
                         -V[(e_m - i_e)])
    
    # Calculate u(m)-u(-1), u(m)-u(1),... 
    # u(m)-u(o_m) (List 3)
    # Initialise List3 to u(m)-u(-1)
    List3 = [V[m]+(1,0)- (1,1)]
    for i_o in range(0,o_m+1):
        if i_o%2 == 1:
            List3.append(V[m]+(1,0)
                         -(V[i_o]+(1,0)))
            
    # List ve_m, v(e_m-2), ... , v0 (List 4)
    List4 = []
    for i_e in range(0,e_m+1):
        if i_e%2 == 0:
            List4.append(V[(e_m - i_e)])
    
    # Concatenate List1, List2, List3, List4
    return(List1 + List2 + List3 + List4)

# Plot the points that define gamma
def gammaplot(a,n):
    u7, v7 = np.array(gammapts(a,n)).T
    plt.plot(
        u7, v7,'s',mec='k',c='none',ms=15)

# Check that 0<a<n and a,n are integers
# Draw P(a,n), lattice pts inside P,
# convex hull of interior pts, 
# convergents of n/a, pts defining 
# the curve gamma
def drawall(a,n):
    if (isinstance(a, int) 
        and isinstance(n, int) and 0<a<n):
        plotparll(a,n)
        drawintparl(a,n)
        cf_conv_plot(a,n)
        drawcvhull(a,n)
        gammaplot(a,n)
    else:
        print('a, n must be integers with 0<a<n')
        
\end{lstlisting}

\





\begin{thebibliography}{99}

\bibitem{A} V.~I.~Arnold, Geometrical Methods in the Theory of Ordinary Differential Equations, 2nd ed., Springer-Verlag, 
(1988).

\bibitem{D} H.~Davenport, The Higher Arithmetic, Dover, (1983).


\bibitem{H-S} C.~Haase and J.~Schicho, Lattice polygons and the number $2i+7$, \emph{American Mathematical Monthly} \textbf{116} (2009), no. 2, 151--165.

\bibitem{I} M.~C.~Irwin, Geometry of continued fractions, \emph{American Mathematical Monthly} \textbf{96} (1989), no. 8, 696--703.

\bibitem{K}O.~Karpenkov, Geometry of continued fractions, Springer-Verlag, (2013).

\bibitem{KR}M.~R~Khan and K.~M.~Rogers, White's Theorem: An exposition of White's characterization of empty lattice tetrahedra, 
{\it Combinatorial and Additive Number Theory II}, Springer Proceedings in Mathematics \& Statistics 220, (2017), 171--180.

\bibitem{Kn} D.~E.~Knuth, The art of computer programming, Vol. 2 / Seminumerical Algorithms,
 3rd ed., Addison-Wesley, (1998).

 \bibitem{P} V.~N.~Popov, Asymptotic behavior of the sum of the sums of the elements of continued fractions for numbers of the type $a/p$, \emph{Aap. Nauchn. Sem. Leningrad. Mat. Inst. Steklov. (LOMI)} {\bf 91} (1979), 81--93, 181.
 
\bibitem{S} H.~M.~Stark, A introduction to number theory, MIT Press, (1991).

\bibitem{St} W.~A.~Stein, Modular forms: a computational approach, American Mathematical Society, (2007). 

\bibitem{Rez} B.~Reznick, Clean lattice tetrahedra, arxiv.org/abs/math/0606227.

\bibitem{Sco} P.~R.~Scott, On convex lattice polygons, \emph{Bull. Austral. Math. Soc.} {\bf 15} (1976), no. 3, 395--399.

\bibitem{Wh} G.~K.~White, Lattice Tetrahedra, \emph{Canad. J. Math} {\bf 16} 1964, 389--396.
\end{thebibliography}
\end{document}